\documentclass[12pt]{article}
\usepackage{amsmath}
\usepackage{amsthm} 
\usepackage{amssymb}
\usepackage{graphicx}
\usepackage{pstricks}
\usepackage{setspace}
\usepackage{maplestd2e}

\DefineParaStyle{Maple Heading 1}
\DefineParaStyle{Maple Text Output}
\DefineParaStyle{Maple Dash Item}
\DefineParaStyle{Maple Bullet Item}
\DefineParaStyle{Maple Normal}
\DefineParaStyle{Maple Heading 4}
\DefineParaStyle{Maple Heading 3}
\DefineParaStyle{Maple Heading 2}
\DefineParaStyle{Maple Warning}
\DefineParaStyle{Maple Title}
\DefineParaStyle{Maple Error}
\DefineCharStyle{Maple Hyperlink}
\DefineCharStyle{Maple 2D Math}
\DefineCharStyle{Maple Maple Input}
\DefineCharStyle{Maple 2D Output}
\DefineCharStyle{Maple 2D Input}
\newtheorem{deff}{Definition}[section]
\newtheorem{twr}[deff]{Theorem}
\newtheorem{lem}[deff]{Lemma}

\newtheorem{coro}[deff]{Corollary}
\newtheorem{example}[deff]{Example}

\newcommand{\E}{\mathbb E}
\newcommand{\N}{\mathbb N}
\newcommand{\I}{\mathbb I}
\newcommand{\diag}{\mathbf{diag}}
\newcommand{\m}[0]{\mathbf{m}}
\newcommand{\e}[0]{\mathbf{e}}
\newcommand{\f}[0]{\mathbf{f}}
\newcommand{\s}[0]{\mathbf{s}}
\newcommand{\PP}[0]{\mathbf{P}}
\newcommand{\C}[0]{\mathbf{C}}
\newcommand{\Q}[0]{\mathbf{Q}}
\newcommand{\R}[0]{\mathbb R}
\newcommand{\X}[0]{\mathbf{X}}
\newcommand{\Hh}[0]{\mathbf{H}}
\newcommand{\dual}[0]{*}

\newcommand{\monD}[0]{\downarrow}
\newcommand{\monU}[0]{\uparrow}
\newcommand{\trev}[0]{\overleftarrow}
\newcommand{\nn}{\textbf n}

\newcommand{\EE}{\mathbb E}

\setlength{\parindent}{0.3
cm}
\setlength{\parskip}{0.9ex plus 0.5ex minus 0.2ex}

\addtolength{\voffset}{-0.5cm}
\addtolength{\hoffset}{-2.25cm}
\addtolength{\textwidth}{3.75cm}
\addtolength{\textheight}{3.75cm}
\pagestyle{plain}

\begin{document}
\title{Strong Stationary Duality for  M\"obius \\ Monotone Markov Chains: Unreliable Networks}

\author{Pawe{\l} Lorek
\thanks{address for both authors: Mathematical
    Institute, University of Wroc{\l}aw,  pl. Grunwaldzki 2/4,  50-384 Wroc{\l}aw, Poland.}
\\{\it  University of Wroc{\l}aw}
\and Ryszard Szekli
\thanks{ Work
supported by MNiSW Research Grant N N201 394137.
}
 \\
  {\it University of Wroc{\l}aw}
} \maketitle

\begin{abstract}
For  Markov chains with a partially ordered finite state space we show strong stationary duality under the condition of M\"obius monotonicity of the chain. We show relations of  M\"obius monotonicity to other definitions of monotone chains. We give examples of dual chains in this context which have transitions only upwards. We illustrate general theory by an analysis of nonsymmetric random walks on the cube with an application to networks of queues.

{\sl Keywords:} Strong Stationary Times; Strong Statianary Duals;  M\"obius function; Speed of convergence; unreliable queueing networks.
\end{abstract}

\section{Introduction}
The motivation of this paper stems from a study on the speed of convergence for unreliable queueing networks, as in Lorek and Szekli \cite{lorekszekli}. The problem of bounding the speed of convergence for networks is a rather complex one, and is related to transient analysis of Markov processes, spectral analysis, coupling or duality constructions, drift properties, monotonicity properties, among others (see for more details Dieker and Warren \cite{dieker}, Aldous \cite{aldous88}, Lorek and Szekli \cite{lorekszekli}). In order to give bounds on the speed of convergence for an unreliable queueing network it is necessary to study the availability coordinate of the network process which is a Markov chain with the state space representing  stations with a \emph{down} status in the network. In other words the chain under study is a Markov chain for which the state space  is the power set of the set of nodes, that is a chain representing a random walk on the vertices of the finite dimensional cube. We are especially interested in walks on the cube which are up-down in the natural (inclusion) ordering. To be more precise,
recall that the classical {\bf Jackson
network} consists of $M$ numbered servers, denoted by
$ {J}:=\{1,\ldots,d\}$. Station $j\in {J}$ is a single server queue with
infinite waiting room under FCFS (First Come First Served) regime. All the
customers in the network are indistinguishable. There is an external Poisson
arrival stream with intensity $\lambda$ and arriving customers are sent
to node $j$ with probability $r_{0j}$, $\sum_{j=1}^dr_{0j}=r\leq 1$. Customers arriving at node $j$ from the outside
or from other nodes request a service which is  at node $j$  provided with intensity $\mu_j(n)$
($\mu_j(0):=0$), where $n$ is the number of customers at node $j$ including the
one being served. All the service times and arrival processes are assumed to be
independent. \par A customer departing from node $i$ immediately proceeds to
node $j$ with probability $r_{ij}\geq 0$ or departs from the network with
probability $r_{i0}$. The routing is independent of the past of the system given the
momentary node where the customer is. We
assume that the  matrix $R:=(r_{ij}, \ i,j\in  {J})$ is irreducible.

Let $Y_j(t)$ be the number of customers present at node $j$, at time $t\geq 0$.
Then $$Y(t)=(Y_1(t),\ldots,Y_d(t))$$ is the joint queue length vector at time
instant $t\geq 0$ and ${\bf Y}:=(Y(t),t\geq 0)$ is the joint queue length
process with the state space $\mathbb{Z}_+^d$.

The unique stationary distribution for ${\bf Y}$ exists if and only if the unique solution of
the {\bf traffic equation}
\begin{equation}\label{eq:traffic}
\lambda_i=\lambda r_{0i}+\sum_{j=1}^d \lambda_j r_{ji}, \quad i=1,\ldots,d
\end{equation}
satisfies
$$C_i:= 1+\sum_{n=1}^\infty {{\lambda}_i^n\over \prod_{y=1}^n \mu_i(y)} <\infty, \quad 1\leq i \leq d.$$

Assume that the servers at the nodes in the Jackson network are
unreliable, i.e., the nodes may break down. The breakdown event may occur in
different ways. Nodes may break down as an isolated event or in groups
simultaneously, and the repair of the nodes may end for each node individually
or in groups as well. It is not required that those nodes which stopped service
simultaneously return to service at the same time instant. To describe the
system's evolution we have to enlarge the state space for the network process
as it will be described below. Denote by $ {J}_0:=\{0,1,\ldots,d\}$  the set of nodes enlarged by adding the \emph{outside} node.
\begin{itemize}
 \item  Let $ {D}\subset  {J}$ be the set of servers out of order, i.e. in \emph{down status} and $ {I}\subset {J}\setminus {D},   {I}\neq\emptyset$ be the subset of nodes in \emph{up status}.
Then the servers in $ {I}$ break down with intensity
$\alpha^{ {D}}_{ {D}\cup {I}}( n_i: i\in  {J})$.

\item  Let $ {D}\subset  {J}$ be the set of servers in \emph{down status} and
$ {H}\subset {D},  {H}\neq\emptyset$. The broken servers from
$ {H}$ return from repair with intensity
$\beta_{ {D}\setminus {H}}^{ {D}}(n_i : i\in  {J})$.

\item The routing is changed according to so-called {\sc Repetitive Service -
Random Destination Blocking} (RS-RD BLOCKING) rule: For  $ {D}$  - set of
servers under repair routing probabilities are restricted to nodes from
$ {J}_0\setminus  {D}$ as follows:
$$
r^{ {D}}_{ij}=\left\{
\begin{array}{lll}
 r_{ij}, & i,j\in {J}_0\setminus  {D}, & i\neq j, \\
 r_{ii}+\sum_{k\in  {D}} r_{ik}, & i\in  {J}_0\setminus {D},& i=j.\\
\end{array}\right.
$$
The external arrival rates are
\begin{equation}\label{external_arrivals}
\lambda r^{ {D}}_{0j}=\lambda r_{0j}
\mathrm{\  for\ nodes\  } j\in {J}\setminus {D},
\end{equation}
and zero, otherwise. Let $R^{ {D}}=(r^{ {D}}_{ij})_{i,j\in {J}_0\setminus  {D}}$ be the modified routing.
Note that $R^{\emptyset}=R$.
\end{itemize}

\label{deff:break_repair_int}
 We assume for the intensities of breakdowns and repairs
 $\emptyset\neq  {H}\subset{ {D}}$ and $\emptyset\neq {I}\subset  {J}\setminus {D}$
 that
$$\begin{array}{lcr}
\alpha^{ {D}}_{ {D}\cup {I}}( n_i: i\in  {J}) &:=& {\psi( {D}\cup {I})\over \psi( {D})},\\
  & &\\
\beta^{ {D}}_{ {D}\setminus {H}}( n_i: i\in  {J}) &:=& {\phi( {D})\over \phi( {D}\setminus{H})},
\end{array}
$$
where $\psi$ and $\phi$ are arbitrary positive functions defined for all subsets of the set of nodes. That means that
 breakdown and
repair intensities depend on the sets of servers but are independent of the particular numbers of
customers present in these servers.

In order to  describe unreliable Jackson networks we need to attach to the
state space $\mathbb{Z}_+^d$ of the corresponding standard network process an
additional component which includes information of availability of the system.
 We consider the following state space
$$\tilde\nn =( {D}, n_1,n_2,\ldots,n_d)\in \mathcal{P}( {J})\times \mathbb{Z}_+^d =: \tilde{\EE}=\E\times \mathbb{Z}_+^d,$$
where $\mathcal{P}( {J})$  denotes the powerset of $ {J}$. The first coordinate in $\tilde\nn $ we call the availability coordinate.

The set $ {D}$ is the set of servers in \emph{down status}. At node $i\in {D}$
there are $n_i$ customers waiting for server being repaired. Denote  possible transitions by

\begin{equation}\label{deff:transfText}
\begin{array}{lll}
T_{ij}\tilde\nn & := & ( {D},n_1,\ldots,n_i-1,\ldots,n_j+1,\ldots, n_d), \\
T_{0 j}\tilde\nn & := & ( {D},n_1,\ldots,n_j+1,\ldots, n_d), \\
T_{i0 }\tilde\nn & := & ( {D},n_1,\ldots,n_i-1,\ldots, n_d), \\
T_{ {H}}\tilde\nn & := & ( {D}\setminus {H},n_1,\ldots, n_d), \\
T^{ {I}}\tilde\nn & := & ( {D}\cup {I},n_1,\ldots, n_d). \\
\end{array}
\end{equation}

The Markov process $\tilde{\bf Y}=((X(t),Y(t)),t\geq 0)$ defined on the state space $\tilde \EE$ by the  infinitesimal
generator
\begin{equation}\label{eq:jack_zawodne_gen}
{\small
\begin{array}{llll}
\displaystyle \tilde{\Q}f(\tilde{\nn})=&\displaystyle \sum_{j=1}^d [f(T_{0
j}\tilde{\nn})-f(\tilde{\nn})]  {\lambda} r^{  D}_{0j} & + &
\displaystyle \sum_{i=1}^d\sum_{j=1}^d [f(T_{ij}\tilde{\nn})-f(\tilde{\nn})]
\mu_i(n_i)r^{  D}_{ij} + \\[15pt]
& \displaystyle \sum_{ {I}\subset {J}} [f(T^{ {I}}\tilde {\nn})-f(\tilde{\nn})]{ \psi( {D}\cup {I})\over \psi( {D})}& +&\displaystyle
\sum_{ {H}\subset {J}} [f(T_{ {H}}\tilde{\nn})-f(\tilde{\nn})]
{ \phi( {D})\over \phi( {D}\setminus{H})}+\\
&\displaystyle \sum_{j=1}^d[f(T_{j0 }\tilde{\nn})-f(\tilde{\nn})]\mu_j(n_j)
r^{  D}_{j0} &
\\

\end{array}
}
\end{equation}
is called {\bf unreliable Jackson network} under RS-RD blocking. The coordinate $X(t)$ is called {\bf availability coordinate}. It takes vales in $\E=\mathcal{P}( {J})$.

Similarly to  classical Jackson networks the invariant distribution for this Markov process can be written in a product form, see Sauer and Daduna \cite{SauDad03}. Moreover the question about speed of convergence for the network process $\tilde{\bf Y}$ can be decomposed into a set of simpler questions about speed of convergence for the chain $(X_n)_{n\ge 0}$ imbedded in the availability Markov process $(X(t), t\ge 0)$, and the arising from the product formula birth and death processes, see e.g. Daduna and Szekli \cite{DadSze08}.

In this paper we shall concentrate our attention on the availability chain.
We utilze strong stationary duality which is a probabilistic approach to the problem of speed of convergence to stationarity for Markov chains introduced by Diaconis and Fill \cite{DiaFil90}. This approach involves strong stationary times introduced earlier by Aldous and Diaconis \cite{AldDia86}, \cite{AldDia87} who gave a number of examples showing useful bounds on the total variation distance for convergence to stationarity in cases where other techniques utilizing eigenvalues or coupling were not easily applicable. A \emph{strong stationary time}  for a Markov chain $(X_n)$ is a stopping time $T$ for this chain for which $X_T$ has stationary distribution and is independent of $T$. Diaconis and Fill \cite{DiaFil90} constructed an absorbing \emph{dual} Markov chain with its absorption time equal to the strong stationary time $T$ for $(X_n)$.

In general, there is no recipe how to construct dual chains.
However, few particular cases are tractable. One of them, as Diaconis and Fill \cite{DiaFil90} (Theorem 4.6) prove, is when the state space  is linearly ordered. In this case,
under the assumption of stochastic monotonicity for the time reversed chain,
and under some (mlr) conditions on the
initial distribution it is possible to construct a dual chain which is a  birth and death process
on the same state space but with absorption,  for which the time to absorption allows for a simpler analysis.

In Section 3 we generalize this construction, using  M\"obius monotonicity instead of stochastic
monotonicity, and using a partially ordered space. This construction is of independent interest.
It turns out that in many cases the resulting dual chain is an analog of pure-birth chains, its transitions
are not-downwards.

Utilization of M\"obius monotonicity involves a general problem of inverting
a sum ranging over a partially ordered set which appears in many combinatorial contexts see e.g. Rota \cite{rota}. The inversion can be carried out by defining an analog of the difference operator relative to
a given partial ordering. Such an operator is the M\"obius function, and the analog of the fundamental theorem of calculus obtained
in this context is the M\"obius inversion formula on a partially ordered set, which we recall in Section \ref{mobius}.




\section{M\"obius function and monotonicity}\label{mobius}
Consider a finite, partially ordered set $\E=\{\e_1,\ldots,\e_M\}$, and denote a partial order on $\E$,  by $\preceq$. We select the above enumeration of $\E$  to be consistent with the partial order, i.e. $\e_i\preceq \e_j$  implies $i<j$.

Let $\X=(X_n)_{n\ge 0}\sim(\nu,\PP)$ be a time homogeneous Markov chain with an initial distribution $\nu$ and its transition function $\PP$ on the state space $\E$. We identify the transition function with the corresponding matrix written for the fixed enumeration of the state space.  Suppose that $\X$ is ergodic with the stationary distribution $\pi$.

We shall use $\wedge$ for meet (greatest lower bound) and $\vee$
for the join (least upper bound) in $\E$. If $\E$ is a lattice, it has the unique
minimal and the unique maximal elements, denoted by $\e_1:=\hat{\mathbf{0}}$ and $\e_M:=\mathbf{\hat{1}},$  respectively.


Recall that the \emph{zeta function} $\zeta$ of the partially ordered set $\E$ is defined by:  $\zeta(\e_i,\e_j)=1$ if $\e_i\preceq \e_j$ and $\zeta(\e_i,\e_j)=0$ otherwise. If the  states are enumerated in such a way, that $\e_i\preceq\e_j$ implies $i<j$ (assumed in this paper), then $\zeta$
can be represented by an upper-triangular, 0-1 valued  matrix $\C$, which is invertible.
It is well known that $\zeta$ is an element of \emph{the incidence algebra} (see Rota \cite{rota}, p.344), which is invertible in this algebra, and the inverse
to $\zeta$,
denoted by $\mu$ is called \emph{M\"obius function}. Using the enumeration which
defines $\C$, the corresponding matrix describing $\mu$ is given by the usual matrix inverse $\C^{-1}$.

 For the state space $\E=\{\e_1,\ldots,\e_M\}$ with the partial ordering $\preceq$
 we define the following operators acting on all functions
 $f:\E\to \R$
 \begin{equation}\label{eq:Sdf} 
S_{\downarrow}f(\e_i)=\sum _{\e\in\E}f(\e)\zeta(\e,\e_i)=\sum_{\e:\e\preceq \e_i} f(\e) =: F(\e_i),
\end{equation}
 and
\begin{equation}\label{eq:Suf} 
S_{\uparrow}f(\e_i)=\sum _{\e\in\E}\zeta(\e_i,\e)f(\e)=\sum_{\e:\e\succeq \e_i} f(\e) =: \bar{F}(\e_i).
\end{equation}
In the matrix notation we shall use the corresponding bold letters for functions, and we have $\mathbf{F}=\mathbf{f}\C$,
$\mathbf{\bar{F}}=\mathbf{f}\C^T$, where $\mathbf{f}=(f(\e_1),\ldots,f(\e_M))$,
$\mathbf{F}=(F(\e_1),\ldots,F(\e_M))$, and $\mathbf{\bar F}=(\bar F(\e_1),\ldots,\bar F(\e_M))$.

The following difference operators $D_{\downarrow}$ and $D_{\uparrow}$ are the inverse operators to the summation operators $S_{\downarrow}$ and $S_{\uparrow}$, respectively
\begin{equation}\label{eq:Ddf} 
D_{\downarrow}f(\e_i)=\sum _{\e\in\E}f(\e)\mu(\e,\e_i)=\sum_{\e:\e\preceq \e_i} f(\e)\mu(\e,\e_i)=:g(\e_i),
\end{equation}
 and
\begin{equation}\label{eq:Duf}
D_{\uparrow}f(\e_i)=\sum _{\e\in\E}\mu(\e_i,\e)f(\e)=\sum_{\e:\e\succeq \e_i} \mu(\e_i,\e)f(\e)=:h(\e_i).
\end{equation}
In the matrix notation  we have $\mathbf{g}=\mathbf{f}\C^{-1}$, and
$\mathbf{h}=\mathbf{f}(\C^T)^{-1}$.

If, for example, the  relations
(\ref{eq:Sdf}) and (\ref{eq:Suf}) hold
then
\begin{equation}\label{eq:Mob_inv_preceq}
 f(\e_i)=\sum_{\e:\e\preceq \e_i} F(\e)\mu(\e,\e_i)=D_{\downarrow}(S_{\downarrow}f(\e_i)),
\end{equation}
and
\begin{equation}\label{eq:Mob_inv_succeq}
 f(\e_i)=\sum_{\e:\e\succeq \e_i} \mu(\e_i,\e)\bar F(\e)= D_{\uparrow}(S_{\uparrow}f(\e_i)),
\end{equation}
respectively.

\begin{deff}

For a Markov chain $\X$ with the transition function $\PP$, we say that  $\PP$ (or alternatively that $\X$ ) is
\begin{itemize}
\item[]
$^\monD$-M\"obius monotone if
\begin{equation}\label{mmdown}
\C^{-1}\PP \C \ge 0,
\end{equation}
\item[]
$^\monU$-M\"obius monotone  if
\begin{equation}\label{mmup}
(\C^T)^{-1}\PP \C^T \ge 0,
\end{equation}
\end{itemize}
where $\PP$ is the matrix of the transition probabilities written using the enumeration which defines
$\C$, and $\geq 0$ means that each entry of a matrix is non-negative.
\end{deff}
\begin{deff}
Function $f: \E \to \R$ is
 \begin{itemize}
\item[]
$^\monD$-M\"obius monotone\ if \  $ \mathbf{f}(\C^T)^{-1}\geq 0,$
\item[]
 $^\monU$-M\"obius monotone\  if \ $\mathbf{f}\C^{-1}\geq 0$.
\end{itemize}
\end{deff}

For example, in terms of the M\"obius function $\mu$ and the transition probabilities,
$\monD$- M\"obius monotonicity of $\PP$ means that for all $(\e_i,\e_j \in \E)$
$$\sum_{\e:\e\succeq \e_i} \mu (\e_i,\e) P(\e,\{\e_j\}^\monD)\geq 0,$$
where $P(\cdot,\cdot)$ denotes the corresponding transition kernel, i.e. $P(\e_i,\{\e_j\}^\monD)=\sum_{\e:\e\preceq \e_j} {\PP}(\e_i,\e),$ and
$\{\e_j\}^\monD =\{\e:\e\preceq \e_j\}$. In order to check such a condition an explicit formula
for $\mu$ is needed. Note that the above definition for monotonicity can be rewritten as follows, $f$ is {$^\monD$-M\"obius monotone\ if \ for some non-negative vector $\m \ge 0$, it holds  $\mathbf{f}=\m \C^T,$ and $f$ is
$^\monU$-M\"obius monotone\  if \ $\mathbf{f}=\m\C$. The last equality means that $f$ is a non-negative linear combination of the rows of matrix $\C$. This monotonicity implies that $f$ is non-decreasing in the usual sense ($f$ non-decreasing means: $ \e_i\preceq \e_j$ implies $f(\e_i)\le f(\e_j)$).

Two probability measures   $\pi_1, \pi_2\in\E$
are (strongly) stochastically ordered (we write $\pi_1\preceq_{st}\pi_2$)
when $\pi_1(A)\leq \pi_2(A)$ for all  {\sl upper sets} $A\subseteq\E$
i.e. sets such that $A=\{ \e : \e\succeq \e_i$ for some $\e_i$ in $A\}$.

$\PP$ is \emph{stochastically monotone} if one of the equivalent conditions holds
\begin{itemize}
\item[(i)]  if $\pi_1\preceq_{st}\pi_2$
then $\pi_1\PP\preceq_{st}\pi_2\PP$, where $\pi_1\PP$
denotes the usual multiplication (vector by matrix)
with the fixed enumeration of coordinates,
\item[(ii)]  if $\e_i\preceq \e_j$ then $P(\e_i,A)\le P(\e_j,A)$, for all upper sets $A$,
\item[(iii)]  if $f$ is non-decreasing then $\PP \mathbf{f}^T$ is non-decreasing .
\end{itemize}
For linearly ordered spaces, conditions (\ref{mmup}) and (\ref{mmdown}) are equivalent and
they define the above (strong)
stochastic monotonicity of $\PP$ (see Keilson and Kester \cite{keilson;kester:77}).

For arbitrary partially ordered spaces
strong stochastic monotonicity need not imply M\"obius monotonicity, as will be seen later in this paper.
Also M\"obius monotonicity does not imply the strong stochastic monotonicity in general. However, as noted by
Massey \cite{massey:87}, M\"obius monotonicity implies a weak stochastic monotonicity.
To be more precise, let us recall the definitions of  weak stochastic monotonicities.
\begin{deff}
 \item[]
 \begin{itemize}
 \item[]
  $\PP$ is $^\monD$- weakly monotone if for
 all $\pi_1\preceq_{\downarrow}\pi_2$ it holds $\pi_1\PP\preceq_{\downarrow}\pi_2\PP$, where $\pi_1\preceq_{\downarrow}\pi_2$ when for
all $\e_i\in\E$, $\pi_1(\{\e_i\}^{\downarrow})\le \pi_1(\{\e_i\}^{\downarrow}$)
\item[]
 $\PP$ is $^\monU$- weakly
monotone if for all $\pi_1\preceq_{\monU}\pi_2$ it holds $\pi_1\PP\preceq_{\monU}\pi_2\PP$,
where $\pi_1\preceq_{\monU}\pi_2$ if $\pi_1(\{\e_i\}^{\uparrow})\leq\pi_2(\{\e_i\}^{\uparrow})$
holds for all $\{\e_i\}^{\uparrow}:=
\{\e\in\E : \e_i\preceq \e\}.$
\end{itemize}
\end{deff}
It is possible to characterize $^\monU$- weak
monotonicity of $\PP$ as an invariance property when $\PP$ is acting on the following subset $\cal F$ of functions: ${\cal F}:= \{ f:  \pi_1 \mathbf{f}^T\le \pi_2 \mathbf{f}^T , \forall \ \ \pi_1\preceq_{\monU}\pi_2\}$. We skip an analogous formulation for $^\monD$- weak
monotonicity of $\PP$.
\begin{lem}
$\PP$ is $^\monU$- weakly
monotone iff
\begin{itemize}

\item[]  $f\in\cal F$ implies $\PP \mathbf{f}^T\in\cal F$, for all $f$.
\end{itemize}
\end{lem}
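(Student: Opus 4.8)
The plan is to reduce the stated equivalence to the elementary adjoint identity
$(\pi\PP)\mathbf{f}^T = \pi(\PP\mathbf{f}^T)$,
valid for every probability vector $\pi$ and every $f:\E\to\R$ by associativity of the matrix product, combined with the observation that $\preceq_{\monU}$ is exactly the order generated by testing against indicators of principal up-sets. Writing $\I_{\{\e_i\}^{\uparrow}}$ for the indicator of $\{\e_i\}^{\uparrow}$, one has $\pi\,\I_{\{\e_i\}^{\uparrow}}^T=\pi(\{\e_i\}^{\uparrow})$, so by the very definition of $\preceq_{\monU}$ the relation $\pi_1\preceq_{\monU}\pi_2$ holds iff $\pi_1\,\I_{\{\e_i\}^{\uparrow}}^T\le \pi_2\,\I_{\{\e_i\}^{\uparrow}}^T$ for every $i$. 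In particular each $\I_{\{\e_i\}^{\uparrow}}$ belongs to $\mathcal{F}$, a fact I would record as a one-line preliminary claim.

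For the forward implication, assume $\PP$ is $^\monU$-weakly monotone and fix $f\in\mathcal{F}$; to show $\PP\mathbf{f}^T\in\mathcal{F}$ I take an arbitrary pair $\pi_1\preceq_{\monU}\pi_2$. Weak monotonicity gives $\pi_1\PP\preceq_{\monU}\pi_2\PP$, and since $f\in\mathcal{F}$ I may apply the defining inequality of $\mathcal{F}$ to this new pair, obtaining $(\pi_1\PP)\mathbf{f}^T\le(\pi_2\PP)\mathbf{f}^T$. The adjoint identity rewrites this as $\pi_1(\PP\mathbf{f}^T)\le\pi_2(\PP\mathbf{f}^T)$, and since the pair was arbitrary, $\PP\mathbf{f}^T\in\mathcal{F}$.

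For the converse, assume $\mathcal{F}$ is invariant under $\PP$ and fix $\pi_1\preceq_{\monU}\pi_2$; I must verify $(\pi_1\PP)(\{\e_i\}^{\uparrow})\le(\pi_2\PP)(\{\e_i\}^{\uparrow})$ for every $i$. For each $i$ the function $\I_{\{\e_i\}^{\uparrow}}$ lies in $\mathcal{F}$ by the preliminary claim, so the invariance hypothesis yields $\PP\,\I_{\{\e_i\}^{\uparrow}}^T\in\mathcal{F}$. Applying the defining inequality of $\mathcal{F}$ to the original pair $\pi_1\preceq_{\monU}\pi_2$ and to the function $\PP\,\I_{\{\e_i\}^{\uparrow}}^T$ gives $\pi_1\bigl(\PP\,\I_{\{\e_i\}^{\uparrow}}^T\bigr)\le\pi_2\bigl(\PP\,\I_{\{\e_i\}^{\uparrow}}^T\bigr)$, and the adjoint identity turns the two sides into $(\pi_1\PP)(\{\e_i\}^{\uparrow})$ and $(\pi_2\PP)(\{\e_i\}^{\uparrow})$, which is what is needed.

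The only substantive point — and the step I would isolate as the preliminary claim — is that the indicators of principal up-sets form a family of test functions contained in $\mathcal{F}$ that is rich enough to recover $\preceq_{\monU}$ on the images $\pi_1\PP,\pi_2\PP$; everything else is the bookkeeping of shuttling $\PP$ between the measure side and the function side via associativity. I would additionally check only the trivial point that $\pi\PP$ remains a probability measure, which is immediate since $\PP$ is stochastic, so that $\preceq_{\monU}$ is legitimately applied to it.
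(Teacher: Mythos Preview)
Your proof is correct and follows essentially the same approach as the paper: both directions hinge on the observation that the indicators $\I_{\{\e_i\}^{\uparrow}}$ lie in $\mathcal{F}$ and generate the order $\preceq_{\monU}$, together with the associativity $(\pi\PP)\mathbf{f}^T=\pi(\PP\mathbf{f}^T)$. You are somewhat more explicit than the paper about isolating the adjoint identity and about verifying that $\pi\PP$ is again a probability measure, but the logical structure of the two arguments is identical.
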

\begin{proof}
Suppose that $f\in\cal F$ implies $\PP \mathbf{f}^T\in\cal F$, and take arbitrary $\pi_1\preceq_{\monU}\pi_2$. For arbitrary $f\in\cal F$ we have then $\pi_1 \mathbf{f}^T\le \pi_2 \mathbf{f}^T$, and $\pi_1 \PP\mathbf{f}^T\le \pi_2 \PP\mathbf{f}^T$. Since $f(\e)=\I_{\{\{\e_i\}\monU\}}(\e)\in {\cal F}$ we have $\pi_1 \PP(\{\e_i\}^{\uparrow})\le \pi_2 \PP(\{\e_i\}^{\uparrow})$, that is $\pi_1\PP\preceq_{\monU}\pi_2\PP$. Conversely, suppose that for all $\pi_1\preceq_{\monU}\pi_2$ implies $\pi_1\PP\preceq_{\monU}\pi_2\PP$. Take arbitrary $f\in {\cal F}$. Then for all $\pi_1\preceq_{\monU}\pi_2$, $\pi_1 \mathbf{f}^T\le \pi_2 \mathbf{f}^T$, and $\pi_1 \PP\mathbf{f}^T\le \pi_2 \PP\mathbf{f}^T$, which implies that $\PP\mathbf{f}^T\in {\cal F}$.
\end{proof}

Similar orderings to the weak stochastic orderings $\preceq_{\monU}$, and  $\preceq_{\monD}$
were studied by Xu and Li \cite{xuli}
for distributions on the d-dimensional cube.
It is reasonable to consider an ordering defined by requiring both weak
stochastic orderings at the same time, which results in a  kind of dependency order
(see Xu and Li \cite{xuli} for details).
\begin{lem}\label{lem:Mw}
\begin{itemize}
\item[]
\item[] $^\monU$-M\"obius monotonicity of $\PP$ implies $^\monU$- weak
monotonicity of $\PP$
\item[] and
\item[] $^\monD$-M\"obius monotonicity of $\PP$ implies $^\monD$- weak
monotonicity of $\PP$.
\end{itemize}
\end{lem}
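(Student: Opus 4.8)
The plan is to reduce both implications to a single matrix identity, which makes the invariance characterization of the previous lemma unnecessary (although it could also be used). The starting point is to rewrite the two weak orderings as coordinatewise orderings of transformed measures. By the definitions of $\zeta$ and $\C$, for any probability row vector $\pi$ one has $\pi(\{\e_i\}^{\monU})=(\pi\C^T)_i$ and $\pi(\{\e_i\}^{\monD})=(\pi\C)_i$; these are precisely the summation operators $S_{\monU}$ and $S_{\monD}$ applied to $\pi$. Consequently $\pi_1\preceq_{\monU}\pi_2$ is equivalent to $(\pi_2-\pi_1)\C^T\ge 0$, and $\pi_1\preceq_{\monD}\pi_2$ to $(\pi_2-\pi_1)\C\ge 0$, with $\ge 0$ understood coordinatewise.

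For the $^{\monU}$-statement, fix $\pi_1\preceq_{\monU}\pi_2$ and put $\mathbf{d}:=\pi_2-\pi_1$, so that $\mathbf{d}\C^T\ge 0$ by the above. I have to show $\pi_1\PP\preceq_{\monU}\pi_2\PP$, that is $\mathbf{d}\PP\C^T\ge 0$. Inserting the identity $\C^T(\C^T)^{-1}$ immediately after $\mathbf{d}$ yields the factorization
\begin{equation*}
\mathbf{d}\PP\C^T=(\mathbf{d}\C^T)\big((\C^T)^{-1}\PP\C^T\big).
\end{equation*}
The first factor is a non-negative row vector by the assumption on $\pi_1,\pi_2$, and the second factor is a non-negative matrix exactly because $\PP$ is $^{\monU}$-M\"obius monotone, i.e.\ condition (\ref{mmup}). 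A non-negative row vector multiplied by a non-negative matrix is non-negative, hence $\mathbf{d}\PP\C^T\ge 0$, as required.

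The $^{\monD}$-statement is proved in the same way with $\C$ in place of $\C^T$: from $\pi_1\preceq_{\monD}\pi_2$ one has $\mathbf{d}\C\ge 0$, and the factorization $\mathbf{d}\PP\C=(\mathbf{d}\C)(\C^{-1}\PP\C)$ exhibits the product of a non-negative vector with the matrix $\C^{-1}\PP\C$, which is non-negative by (\ref{mmdown}); hence $\mathbf{d}\PP\C\ge 0$, i.e.\ $\pi_1\PP\preceq_{\monD}\pi_2\PP$.

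There is no genuinely hard step here. The single idea is the factorization isolating the M\"obius-monotonicity matrix $(\C^T)^{-1}\PP\C^T$ (respectively $\C^{-1}\PP\C$), whose non-negativity is precisely the hypothesis. The only points requiring care are bookkeeping ones: treating measures consistently as left-acting row vectors, and pairing each weak order with the correct summation matrix ($\C^T$ for $\monU$, $\C$ for $\monD$).
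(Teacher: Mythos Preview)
Your proof is correct and is essentially the same as the paper's: both translate $\preceq_{\monU}$ into the coordinatewise inequality $\pi_1\C^T\le\pi_2\C^T$ and then use the factorization $\PP\C^T=\C^T\big((\C^T)^{-1}\PP\C^T\big)$ together with non-negativity of the M\"obius-monotonicity matrix. The only cosmetic difference is that you work with the difference vector $\mathbf{d}=\pi_2-\pi_1$ while the paper keeps $\pi_1$ and $\pi_2$ separate and names $\mathbf{M}:=(\C^T)^{-1}\PP\C^T$.
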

\begin{proof}
We shall prove only the first implication, the second one can be obtained by replacing $\C^T$ with $\C$ in the argument.
Let $\mathbf{M}=(\C^T)^{-1}\PP\C^T\geq 0$. We have to show
the $^\monU$weak monotonicity of $\PP$ which
is equivalent to the property that  $\pi_1\PP\preceq_{\uparrow}\pi_2\PP,$
for all $\pi_1\preceq_{\monU}\pi_2$.  Suppose that $\pi_1\preceq_{\monU}\pi_2$, this means in terms of $\C$ that $\pi_1\C^T \leq \pi_2\C^T$, coordinatewise. Note that  $^\monU$-M\"obius monotonicity of $\PP$ is equivalent to $\PP\C^T=\C^T \mathbf{M}$. Multiplying the inequality $\pi_1\C^T \leq \pi_2\C^T$ by $\mathbf{M}$ (it is non-negative) we obtain  $\pi_1\C^T\mathbf{M} \leq \pi_2\C^T\mathbf{M}$, and using  $^\monU$-M\"obius monotonicity we get $\pi_1\PP \C^T \leq \pi_2\PP \C^T$, which implies that $\pi_1\PP \preceq_{\monU} \pi_2\PP $.

\end{proof}
The above result for $^\monU$ monotonicity was, independently from Massey \cite{massey:87},  given by Falin \cite{falin}
 in his Theorem 2.

It is also possible to characterize $^\monU$-M\"obius monotonicity
 of $\PP$ as an invariance property when $\PP$ is acting on the set of
all  $^\monU$-M\"obius monotone functions. Note that this set is strictly smaller than $\cal F$. Indeed, taking two probability measures $\pi_1\preceq_{\monU}\pi_2$ on $\{0,1\}^2$, we have always $\pi_2(\{(0,1),(0,0)\})\le \pi_1(\{(0,1),(0,0)\})$, and $\pi_2(\{(1,0),(0,0)\})\le \pi_1(\{(1,0), (0,0)\})$, and for $f$ such that $f((0,1)=-1$, $f((1,0))=-1$, $f((0,0))=-1$ $f((1,1))=0$ we have $ \pi_1 \mathbf{f}^T\le \pi_2 \mathbf{f}^T$, hence $f\in {\cal F}$, but $f$ is not a $^\monU$-M\"obius monotone function. We skip the corresponding formulation for $^\monD$-M\"obius monotonicity
 of $\PP$.
\begin{lem}
$\PP$ is $^\monU$-M\"obius monotone iff
\begin{itemize}
\item[]   $f$ is $^\monU$-M\"obius monotone  implies that $\PP \mathbf{f}^T$ is $^\monU$-M\"obius monotone.
\end{itemize}
\end{lem}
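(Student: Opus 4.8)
The plan is to argue entirely at the level of matrices, as in the proof of Lemma~\ref{lem:Mw}, reducing the stated equivalence to a single entrywise nonnegativity condition which is then matched to $(\C^T)^{-1}\PP\C^T\ge 0$. Throughout I identify a function with its row vector $\mathbf{f}=(f(\e_1),\ldots,f(\e_M))$, so that $\PP\mathbf{f}^T$ corresponds to the row vector $\mathbf{f}\PP^T$. I shall use the remark following the definitions: $f$ is $^\monU$-M\"obius monotone precisely when $\mathbf{f}=\m\C$ for some non-negative vector $\m\ge 0$ (equivalently $\mathbf{f}\C^{-1}\ge 0$), that is, when $f$ is a non-negative combination of the rows of $\C$.

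First I would rewrite the consequent as a matrix condition. The function $\PP\mathbf{f}^T$ is $^\monU$-M\"obius monotone exactly when its row vector $\mathbf{f}\PP^T$ satisfies $\mathbf{f}\PP^T\C^{-1}\ge 0$. Parametrizing the $^\monU$-M\"obius monotone functions by $\mathbf{f}=\m\C$ with $\m\ge 0$, the assertion ``$f$ $^\monU$-M\"obius monotone implies $\PP\mathbf{f}^T$ $^\monU$-M\"obius monotone'' becomes the requirement that $\m\,\C\PP^T\C^{-1}\ge 0$ for every $\m\ge 0$. Letting $\m$ range over the standard basis row vectors, and using that $\m A\ge 0$ for all $\m\ge 0$ if and only if $A\ge 0$, this is equivalent to $\C\PP^T\C^{-1}\ge 0$.

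It remains to match this with the defining condition of $^\monU$-M\"obius monotonicity of $\PP$. The key identity is $\C\PP^T\C^{-1}=\big((\C^T)^{-1}\PP\C^T\big)^T$, which follows from $(\C^T)^T=\C$ and $((\C^T)^{-1})^T=\C^{-1}$. Since entrywise nonnegativity is invariant under transposition, $\C\PP^T\C^{-1}\ge 0$ holds iff $(\C^T)^{-1}\PP\C^T\ge 0$, i.e. iff $\PP$ is $^\monU$-M\"obius monotone. Because every step in the chain is an equivalence, both directions of the stated ``iff'' follow simultaneously.

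The main obstacle is purely one of bookkeeping the transposes: since $\PP$ acts on functions from the right via $\PP\mathbf{f}^T$, the characterization naturally produces the matrix $\C\PP^T\C^{-1}$ rather than the defining matrix $(\C^T)^{-1}\PP\C^T$. The point that dissolves this apparent discrepancy is that the two matrices are transposes of one another and that entrywise nonnegativity is transpose invariant; one also needs the elementary observation, obtained by testing against standard basis vectors, that a universally quantified inequality $\m A\ge 0$ over all $\m\ge 0$ collapses to $A\ge 0$, in order to strip the quantifier over $\m$.
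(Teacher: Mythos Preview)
Your proof is correct and follows essentially the same approach as the paper: parametrize $^\monU$-M\"obius monotone functions as $\mathbf{f}=\m\C$ with $\m\ge 0$, translate the invariance condition into $\m\,\C\PP^T\C^{-1}\ge 0$ for all $\m\ge 0$, and identify this via transposition with $(\C^T)^{-1}\PP\C^T\ge 0$. The only cosmetic difference is that you package the argument as a single chain of equivalences and isolate the matrix identity $\C\PP^T\C^{-1}=\big((\C^T)^{-1}\PP\C^T\big)^T$ explicitly, whereas the paper treats the two implications separately and applies the transposition at the vector level.
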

\begin{proof}
Suppose that $\PP$ is $^\monU$-M\"obius monotone, that is $(\C^T)^{-1}\PP\C^T\ge 0$. Take arbitrary $f$ which is $^\monU$-M\"obius monotone, i.e. take $\f=\m\C$ for some arbitrary $\m\ge 0$. Then  $(\C^T)^{-1}\PP\C^T\m^T\ge 0$, which is (using transposition) equivalent to $\f\PP^T\C^{-1}\ge 0$, which in turn gives (from definition) that $\PP\f^T$ is  $^\monU$-M\"obius monotone. Conversely, for all $\f=\m\C$, where $\m\ge0$, we have $\f\PP^T\C^{-1}\ge 0$, since $\PP\f^T$ is  $^\monU$-M\"obius monotone. This implies that
$(\C^T)^{-1}\PP\C^T\m^T\ge 0$, and $(\C^T)^{-1}\PP\C^T\ge 0$.
\end{proof}
In general, weak stochastic monotonicity does not imply M\"obius monotonicity (see Massey \cite{massey:87} or Falin \cite{falin} for counterexamples).

We shall give more examples of M\"obius monotone chains later. However, many examples can be produced using the fact that the set of M\"obius monotone matrices is a convex subset of the set of transition matrices.
\begin{lem}
\begin{itemize}
\item[]
\item[(i)] If $\PP_1$ and $\PP_2$ are $^\monU$-M\"obius monotone ( $^\monD$-M\"obius monotone) then $\PP_1 \PP_2$ is $^\monU$-M\"obius monotone ( $^\monD$-M\"obius monotone).
\item[(ii)] If $\PP$ is $^\monU$-M\"obius monotone ( $^\monD$-M\"obius monotone) then $(\PP)^k$ is $^\monU$-M\"obius monotone ( $^\monD$-M\"obius monotone), for each $k\in \N$.
\item[(iii)] If $\PP_1$ is $^\monU$-M\"obius monotone ( $^\monD$-M\"obius monotone) and $\PP_2$ is $^\monU$-M\"obius monotone ( $^\monD$-M\"obius monotone) then
    $$p\PP_1+(1-p)\PP_2$$
    is $^\monU$-M\"obius monotone ( $^\monD$-M\"obius monotone), for all $p\in (0,1)$.
\end{itemize}
\end{lem}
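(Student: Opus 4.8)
The plan is to exploit the fact that both notions of M\"obius monotonicity are phrased as entrywise nonnegativity of a \emph{conjugate} of $\PP$, and that conjugation by a fixed invertible matrix is both linear and multiplicative. I will treat the $^\monU$ case throughout; the $^\monD$ case follows verbatim upon replacing $\C^T$ by $\C$, exactly as in the proof of Lemma~\ref{lem:Mw}. The single idea doing all the work is the cancellation $\C^T(\C^T)^{-1}=I$, which turns the conjugate of a product into the product of the conjugates.

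For (i), write $\mathbf{M}_1=(\C^T)^{-1}\PP_1\C^T$ and $\mathbf{M}_2=(\C^T)^{-1}\PP_2\C^T$, both $\ge 0$ by hypothesis. Inserting $\C^T(\C^T)^{-1}=I$ between the two factors gives
\[
(\C^T)^{-1}\PP_1\PP_2\C^T=\bigl((\C^T)^{-1}\PP_1\C^T\bigr)\bigl((\C^T)^{-1}\PP_2\C^T\bigr)=\mathbf{M}_1\mathbf{M}_2.
\]
Since a product of two entrywise nonnegative matrices is entrywise nonnegative, $\mathbf{M}_1\mathbf{M}_2\ge 0$, so $\PP_1\PP_2$ is $^\monU$-M\"obius monotone. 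Part (ii) is then immediate by induction on $k$: the case $k=1$ is the hypothesis, and for the inductive step I apply (i) to $\PP^k=\PP^{k-1}\PP$, using that $\PP^{k-1}$ is $^\monU$-M\"obius monotone by the induction hypothesis.

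For (iii), I use linearity of conjugation,
\[
(\C^T)^{-1}\bigl(p\PP_1+(1-p)\PP_2\bigr)\C^T=p\,(\C^T)^{-1}\PP_1\C^T+(1-p)\,(\C^T)^{-1}\PP_2\C^T,
\]
and observe that each summand is $\ge 0$, because $p,1-p>0$ and both conjugates are nonnegative by hypothesis; a nonnegative linear combination of entrywise nonnegative matrices is again nonnegative.

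I do not anticipate a genuine obstacle: the whole content is that the cone of matrices whose conjugate (by $\C^T$, resp.\ $\C$) is entrywise nonnegative is closed under products and under nonnegative linear combinations. The only point meriting a moment's care is the identity $\C^T(\C^T)^{-1}=I$ used in (i) to split the conjugate of a product; everything else is routine bookkeeping, and stability under multiplication is what makes (ii) and, together with stability under convex combinations, the convexity claim fall out for free.
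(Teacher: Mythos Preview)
Your proposal is correct and matches the paper's approach essentially verbatim: the paper proves (i) by the same insertion of $\C^T(\C^T)^{-1}$ to write $(\C^T)^{-1}\PP_1\PP_2\C^T=\bigl((\C^T)^{-1}\PP_1\C^T\bigr)\bigl((\C^T)^{-1}\PP_2\C^T\bigr)\ge 0$, and then dismisses (ii) and (iii) as ``immediate from definition.'' Your slightly more explicit treatment of (ii) and (iii) adds nothing substantively new.
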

\begin{proof}
Ad (i). Since $(\C^T)^{-1}\PP_1\C^T\ge 0$, and $(\C^T)^{-1}\PP_2\C^T\ge 0$ then
$$(\C^T)^{-1}\PP_1\PP_2\C^T=((\C^T)^{-1}\PP_1C^T)((C^T)^{-1}\PP_2\C^T)\ge 0.$$
The statements $(ii), (iii)$ are immediate from definition.
\end{proof}
\section{Strong stationary duality for $^\monD$-M\"obius monotone chains}\label{sec:main_result}
\subsection{Time to stationarity}\label{sec:TimeToStat}
One possibility of measuring distance to stationarity is to use
separation distance (see Aldous and Diaconis \cite{AldDia87}). Let $s(\nu\PP^n,\pi)=\max_{\e\in \E}\left(1-
\nu\PP^n(\e)/\pi(\e)\right)$. Separation distance is an upper bound
on total variation distance: $s(\nu\PP^n,\pi)\ge d(\nu\PP^n,\pi):=\max_{B\subset \E}|\nu\PP^n(B)-\pi(B)|$.
\par
A random variable $T$ is a {\em Strong Stationary Time} (SST)  if it is
a randomized stopping time for $\X=(X_n)_{n\ge 0}$ such that $T$ and $X_T$ are independent, and
$X_T$ has distribution $\pi$. SST was introduced by Aldous and Diaconis in \cite{AldDia86,AldDia87}.
In \cite{AldDia87} they prove that $s(\nu\PP^n,\pi)\leq P(T>n)$ ($T$
implicitly depends on $\nu$). Diaconis \cite{Diaconis_book} gives some examples
of bounds on the rates of convergence to stationarity via SST. However, the method how to find
 SST was specific to each example.

Diaconis and Fill \cite{DiaFil90} introduced so-called {\em Strong Stationary Dual} (SSD)
 chains. Such chains have a special feature, namely for them the  SST for the original process has the same distribution as the
time to absorption in the SSD one.

To be more specific,
let $\X^\dual$ be a Markov chain with transition matrix $\PP^\dual$, initial distribution
$\nu^\dual$ on the state space $\E^\dual$. Assume
that $\e^\dual_a$ is an absorbing state for $\X^\dual$.
Let $\Lambda\equiv\Lambda(\e^\dual,\e), \e^\dual\in \E^\dual, \e\in \E$ be a kernel, called a {\em link}, such that  $\Lambda(\e_a^\dual,\cdot)=\pi$ for $\e^\dual_a\in \E^\dual$.
Diaconis and Fill \cite{DiaFil90} prove, that if a $(\nu^*,\PP^*)$ is a SSD of $(\nu,\PP)$ with respect to $\Lambda$ in the sense that
\begin{equation}\label{eq:duality}
\nu=\nu^\dual\Lambda \quad \mbox{ and } \quad \Lambda\PP=\PP^\dual\Lambda,
\end{equation}
then there exists a bivariate Markov chain $(\X,\X^*)$ with the following marginal properties:
\begin{enumerate}
 \item[] $\X$ is  Markov  with the initial distribution $\nu$ and the transition matrix $\PP$,
 \item[] $\X^\dual$ is Markov  with the initial distribution $\nu^\dual$ and the transition matrix $\PP^\dual$,
\item[] the absorption time $T^\dual$ of $\X^\dual$ is a SST for $\X$.
\end{enumerate}

This means, that once one finds a SSD chain for a given chain, then the problem of finding the distribution of SST for the original chain
  can be translated into the problem of finding the distribution of the time to absorbtion in SSD chain. However, there is no general recipe on how to find a link, and a SSD chain. It is also not clear that the absorbtion time in a SSD chain will always be easier to study than the SST time in the original chain. However the case of linearly ordered spaces is a convincing example that it is worth trying to realize such a scenario. In this case, if the original chain is a birth and death process then it is possible to find SSD which is a birth-birth process with absorption. In this paper we find an analog of this situation, starting with some up-down chains we find SSD chains which do not jump downwards or even chains which are jumping only upwards. Details will be presented in the next section.
\subsection{Main result}
Recall that  $\trev{{\bf X}}=(\trev X_n)_{n\ge 0}$ is the time reversed process if its transition matrix is given by
$$
\trev\PP=(\diag(\pi))^{-1}\PP^T(\diag(\pi)),
$$
where $\diag(\pi)$ denotes the matrix which is diagonal with the stationary vector $\pi$ on the diagonal.

\begin{twr}\label{twr:main1}
 Let $\X\sim(\nu,\PP)$ be an ergodic Markov chain on a finite state space $\E=\{\e_1,\ldots,\e_M\}$,
partially ordered by $\preceq$, with an unique maximal state $\e_M$, and
 with the
 stationary distribution $\pi$. Assume that
\begin{itemize}\label{eq:mupi_Mob}
 \item[(i)] $g(\e)={\nu(\e)\over \pi(\e)}$  is $^\monD$-M\"obius monotone,
 \item[(ii)] $\trev\X$ is $\monD$-M\"obius monotone.\label{eq:rev_Mob}
\end{itemize}
Then there exists Strong Stationary Dual chain $\X^\dual\sim(\nu^\dual,\PP^\dual)$ on $\E^\dual=\E$ with the following link kernel
$$\Lambda(\e_j,\e_i)=\I(\e_i\preceq \e_j) {\pi(\e_i)\over H(\e_j)},$$
where $H(\e_j)=S_{\downarrow} \pi(\e_j)=\sum_{\e:\e\preceq \e_j}\pi(\e)$ ($\Hh=\pi\C$). Moreover, the SSD is uniquely determined by
\begin{itemize}
\item[]
 $ \nu^\dual(\e_i)=H(\e_i)\sum_{\e:\e\succeq \e_i}\mu(\e_i,\e)g(\e)=S_{\downarrow} \pi(\e_i)D^\monU g(\e_i) ,\ \ \ \ \  \e_i\in\E,$
 \item[]
 $\PP^\dual(\e_i,\e_j)=
 \frac{H(\e_j)}{ H(\e_i)}\sum_{\e:\e\succeq \e_j} \mu(\e_j,\e)\trev P(\e,\{\e_i\}^\monD)=\frac{S_{\downarrow} \pi(\e_j)}{ S_{\downarrow} \pi(\e_i)}D^\monU \trev P(\e_j,\{\e_i\}^\monD), \ \  \  \e_i,\e_j\in \E.$
\end{itemize}
The corresponding matrix formulas are given by
\begin{align}
\nu^\dual &= \mathbf{g}(\C^T)^{-1}\diag(\pi\C),   \\[5pt] \label{eq:main1_matrix_nu}
\PP^\dual &= (\diag(\Hh)\ \C^{-1}\trev{\PP}\C \ \diag(\Hh)^{-1})^T=
 \\
  &=\diag(\pi\C)^{-1} (\C^T\diag(\pi)) \PP (\C^T\diag(\pi))^{-1} \diag(\pi\C), \label{eq:main1_matrix_trans}
\end{align}
where $\mathbf{g}=(g(\e_1,\ldots, g(e_M))$ (row vector).
\end{twr}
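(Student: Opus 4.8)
The plan is to produce the claimed objects explicitly and to check the two defining duality relations (\ref{eq:duality}) together with the side requirements that $\nu^\dual$ is a probability vector, that $\PP^\dual$ is stochastic with $\e_M$ absorbing, and that $\Lambda(\e_M,\cdot)=\pi$; the stated conclusion then follows by the Diaconis--Fill construction \cite{DiaFil90} recalled above. First I would put the link into matrix form. Since $\C_{ij}=\zeta(\e_i,\e_j)=\I(\e_i\preceq\e_j)$ and $H(\e_j)=(\pi\C)_j$, the entrywise definition of $\Lambda$ is exactly
\[ \Lambda=\diag(\Hh)^{-1}\,\C^T\,\diag(\pi). \]
Because the enumeration is consistent with $\preceq$, this $\Lambda$ is lower triangular with strictly positive diagonal entries $\pi(\e_j)/H(\e_j)$ (here ergodicity gives $\pi>0$, hence $H>0$), so $\Lambda$ is invertible; moreover each row sums to $H(\e_j)/H(\e_j)=1$, i.e. $\Lambda$ is a stochastic kernel with $\Lambda\mathbf{1}=\mathbf{1}$.

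Given this invertible link, the duality relations (\ref{eq:duality}) force $\nu^\dual=\nu\Lambda^{-1}$ and $\PP^\dual=\Lambda\PP\Lambda^{-1}$, which proves uniqueness and at the same time supplies the candidates. Using $\Lambda^{-1}=\diag(\pi)^{-1}(\C^T)^{-1}\diag(\Hh)$ and $\nu\,\diag(\pi)^{-1}=\mathbf{g}$ gives $\nu^\dual=\mathbf{g}(\C^T)^{-1}\diag(\pi\C)$, which is (\ref{eq:main1_matrix_nu}). For the transition matrix I would insert the time-reversal identity $\trev{\PP}=\diag(\pi)^{-1}\PP^T\diag(\pi)$, equivalently $\trev{\PP}^T=\diag(\pi)\PP\diag(\pi)^{-1}$, into $\Lambda\PP\Lambda^{-1}=\diag(\Hh)^{-1}\C^T\diag(\pi)\PP\diag(\pi)^{-1}(\C^T)^{-1}\diag(\Hh)$; this is at once the conjugated form (\ref{eq:main1_matrix_trans}) and, after transposition, the form $(\diag(\Hh)\,\C^{-1}\trev{\PP}\C\,\diag(\Hh)^{-1})^T$. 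The entrywise expressions for $\nu^\dual$ and $\PP^\dual$ are then read off using the operator $D^\monU$ and the kernel $\trev{P}(\cdot,\{\e_i\}^\monD)$.

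The substance of the theorem is that these candidates are genuine probabilistic objects, and this is precisely where the hypotheses enter. Since $\diag(\pi\C)\ge 0$, non-negativity of $\nu^\dual=\mathbf{g}(\C^T)^{-1}\diag(\pi\C)$ reduces to $\mathbf{g}(\C^T)^{-1}\ge 0$, which is exactly the definition of assumption (i) that $g$ is $^\monD$-M\"obius monotone. Likewise, since $\diag(\Hh),\diag(\Hh)^{-1}\ge 0$, non-negativity of $\PP^\dual$ reduces to $\C^{-1}\trev{\PP}\C\ge 0$, which is precisely condition (\ref{mmdown}) for $\trev{\PP}$, i.e. assumption (ii) that $\trev{\X}$ is $^\monD$-M\"obius monotone. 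Normalisation is then automatic: because $\Lambda\mathbf{1}=\mathbf{1}$ and $\PP\mathbf{1}=\mathbf{1}$, the relation $\nu=\nu^\dual\Lambda$ gives $\nu^\dual\mathbf{1}=\nu^\dual\Lambda\mathbf{1}=\nu\mathbf{1}=1$, while $\Lambda\PP=\PP^\dual\Lambda$ gives $\PP^\dual\mathbf{1}=\PP^\dual\Lambda\mathbf{1}=\Lambda\PP\mathbf{1}=\Lambda\mathbf{1}=\mathbf{1}$, so $\PP^\dual$ is stochastic.

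It remains to locate the absorbing state and verify the link condition there. Putting $\e_i=\e_M$ in the link and using that $\e_M$ is maximal (so $\e\preceq\e_M$ for all $\e$ and $H(\e_M)=\sum_{\e}\pi(\e)=1$) gives $\Lambda(\e_M,\cdot)=\pi$. For absorption, setting $\e_i=\e_M$ in the entrywise formula for $\PP^\dual$ makes $\trev{P}(\e,\{\e_M\}^\monD)=\trev{P}(\e,\E)=1$, whence $\PP^\dual(\e_M,\e_j)=H(\e_j)\sum_{\e\succeq\e_j}\mu(\e_j,\e)$; the defining relation $\mu=\zeta^{-1}$ yields $\sum_{\e:\e_j\preceq\e\preceq\e_M}\mu(\e_j,\e)=\I(\e_j=\e_M)$, and since $\e_M$ is maximal this is the sum over all $\e\succeq\e_j$, so $\PP^\dual(\e_M,\e_j)=\I(\e_j=\e_M)$, i.e. $\e_M$ is absorbing. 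Having verified (\ref{eq:duality}), that $\nu^\dual$ is a probability vector and $\PP^\dual$ a stochastic matrix with absorbing $\e_M$, and that $\Lambda(\e_M,\cdot)=\pi$, the Diaconis--Fill theorem supplies the bivariate chain whose absorption time is an SST for $\X$. I expect the only real obstacle to be the non-negativity step: the intertwining is a free algebraic consequence of conjugation by $\Lambda$, so the entire probabilistic content is carried by the recognition that the two $^\monD$-M\"obius monotonicity hypotheses are literally the assertions that $\mathbf{g}(\C^T)^{-1}$ and $\C^{-1}\trev{\PP}\C$ have non-negative entries.
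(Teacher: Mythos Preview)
Your proof is correct and follows essentially the same route as the paper's. Both proofs solve the duality equations \eqref{eq:duality} for $\nu^\dual$ and $\PP^\dual$, then invoke assumptions (i) and (ii) precisely to guarantee non-negativity, and finally obtain normalisation from the fact that $\Lambda$ is stochastic. The only cosmetic difference is that the paper carries out the inversion entrywise via the M\"obius inversion formula \eqref{eq:Mob_inv_succeq}, whereas you observe that $\Lambda$ is lower triangular with positive diagonal and invert it globally; since $\C^{-1}$ is the matrix of the M\"obius function these are the same computation, though your framing makes the uniqueness claim more transparent. You also verify explicitly that $\Lambda(\e_M,\cdot)=\pi$ and that $\e_M$ is absorbing, which the paper relegates to a remark after the proof.
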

\begin{proof}
We have to check the conditions  (\ref{eq:duality}).
The  first condition given in (\ref{eq:duality}) reads for arbitrary $\e_i\in\E$
\begin{equation}\label{eq:mue}
 \nu(\e_i)  =   \sum_{ \e\succeq \e_i} \nu^\dual(\e){\pi(\e_i)\over H(\e)}
\end{equation}
which is equivalent to
$${\nu(\e_i)\over \pi(\e_i)}= \sum_{\e: \e\succeq \e_i}{ \nu^\dual(\e) \over H(\e)}.$$
From  the M\"obius inversion formula (\ref{eq:Mob_inv_succeq}) we get
$$
{ \nu^\dual(\e_i) \over H(\e_i)}=\sum_{\e: \e\succeq \e_i}\mu(\e_i,\e){\nu(\e)\over \pi(\e)},
$$
which gives the required formula. From the assumption that $g$ is $\monD$-M\"obius monotone it follows that $\nu^\dual\ge 0$. Moreover, since $\nu^\dual=\nu \Lambda$, and $\Lambda$ is a transition kernel, it is clear that $\nu^\dual$ is a probability vector.

The second condition given in (\ref{eq:duality}) means that for all $\e_i,\e_j\in \E$
$$\sum_{\e\in\E} \Lambda(\e_i,\e)\PP(\e,\e_j)=\sum_{\e\in\E} \PP^\dual(\e_i,\e)\Lambda(\e,\e_j).$$
Taking the proposed $\Lambda$ we have to check that
$$\sum_{\e:\e\preceq \e_i} {\pi(\e)\over H(\e_i)} \PP(\e,\e_j)
=\sum_{\e:\e\succeq \e_j} {\pi(\e_j)\over H(\e)}\PP^\dual(\e_i,\e), $$
that is
$${1\over H(\e_i)}\sum_{\e:\e\preceq \e_i} {\pi(\e)\over \pi(\e_j)} \PP(\e,\e_j)
=\sum_{\e:\e\succeq \e_j} {\PP^\dual(\e_i,\e)\over H(\e)}. $$
Using ${\pi(\e)\over\pi(\e_j)}\PP(\e,\e_j)=\trev{\PP}(\e_j,\e)$ we have
\begin{equation}\label{eq:rev_sum01}
{1\over H(\e_i)}  \trev P(\e_j, \{\e_i\}^\monD)
=\sum_{\e:\e\succeq \e_j} {\PP^\dual(\e_i,\e)\over H(\e)}.
\end{equation}
For each fixed $\e_i$ we treat ${1\over H(\e_i)}  \trev P(\e_j, \{\e_i\}^\monD)$ as a function of $\e_j$ and again use the  M\"obius inversion formula (\ref{eq:Mob_inv_succeq}) to get from (\ref{eq:rev_sum01})
$${\PP^\dual(\e_i,\e_j)\over H(\e_j)}
=\sum_{\e:\e\succeq \e_j} \mu(\e_j,\e) {\trev P(\e, \{\e_i\}^\monD)\over H(\e_i)}. $$
In the matrix notation we have
$$
\C^{-1}\PP\C (\e_i,\e_j)=\sum_{\e:\e\succeq\e_i}\mu(\e_i,\e)P(\e,\{\e_j\}^\monD),
$$
therefore
$$
\PP^\dual = (\diag(\Hh)\ \C^{-1}\trev{\PP}\C \ \diag(\Hh)^{-1})^T.
$$
Since, from our assumption, $\C^{-1}\trev{\PP}\C\ge 0$, we have $\PP^\dual\ge 0$. Now $\Lambda \PP=\PP^\dual \Lambda$ implies that $\PP^\dual$ is a transition matrix.
\end{proof}

In a similar way we construct an analog SSD chain for $^\monU$-M\"obius monotone $\PP$. We skip the corresponding matrix formulation and a proof.
\begin{coro}\label{twr:main2}
Let $\X\sim(\nu,\PP)$ be an ergodic Markov chain on a finite state space $\E=\{\e_1,\ldots,\e_M\}$,
partially ordered by $\preceq$, with an unique minimal state $\e_1$, and
 with the
 stationary distribution $\pi$. Assume that
\begin{itemize}\label{eq:mupi_Mob}
 \item[(i)] $g(\e)={\nu(\e)\over \pi(\e)}$  is $^\monU$-M\"obius monotone,
 \item[(ii)] $\trev\X$ is $\monU$-M\"obius monotone.
\end{itemize}
Then there exists Strong Stationary Dual chain $\X^\bullet\sim(\nu^\bullet,\PP^\bullet)$ on $\E^\bullet=\E$ with the following link
$$\Lambda^\bullet(\e_j,\e_i)=\I(\e_i\succeq \e_j) {\pi(\e_i)\over \bar H(\e_j)},$$
where $\bar H(\e_j)=S_{\monU} \pi(\e_j)$. Moreover, the SSD is uniquely determined by
\begin{itemize}
\item[]
 $ \nu^\bullet(\e_i)=\bar H(\e_i)\sum_{\e:\e\preceq \e_i}g(\e)\mu(\e,\e_i)=S_{\uparrow} \pi(\e_i)D^\monD g(\e_i) ,\ \ \ \ \  \e_i\in\E,$
 \item[]
 $\PP^\bullet(\e_i,\e_j)=
 \frac{\bar H(\e_j)}{\bar H(\e_i)}\sum_{\e:\e\preceq \e_j} \trev P(\e,\{\e_i\}^\monU)\mu(\e,\e_j)=\frac{S_{\uparrow} \pi(\e_j)}{ S_{\uparrow} \pi(\e_i)}D^\monD \trev P(\e_j,\{\e_i\}^\monU), \ \  \  \e_i,\e_j\in \E.$
\end{itemize}
\end{coro}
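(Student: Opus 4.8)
The plan is to follow the proof of Theorem~\ref{twr:main1} line for line, carrying out each step under the order-reversing dictionary $\monD \leftrightarrow \monU$, $\C \leftrightarrow \C^T$, $\preceq \leftrightarrow \succeq$, $S_{\downarrow} \leftrightarrow S_{\uparrow}$, $H \leftrightarrow \bar H$, and unique maximal $\leftrightarrow$ unique minimal element. Concretely, I would verify the two defining identities of~(\ref{eq:duality}), namely $\nu = \nu^\bullet\Lambda^\bullet$ and $\Lambda^\bullet\PP = \PP^\bullet\Lambda^\bullet$; in each case the proposed $\nu^\bullet$ and $\PP^\bullet$ are recovered by M\"obius inversion, and hypotheses (i) and (ii) serve only to supply non-negativity. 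Note first that the unique minimal state $\e_1$ plays the role $\e_M$ played before: since every state lies above $\e_1$, one has $\bar H(\e_1) = S_{\uparrow}\pi(\e_1) = 1$, so $\Lambda^\bullet(\e_1,\cdot) = \pi$, i.e. $\e_1$ is the absorbing state whose link row equals $\pi$, as the Diaconis--Fill framework requires.

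For the first identity, writing $\nu = \nu^\bullet\Lambda^\bullet$ coordinatewise and using that $\Lambda^\bullet(\e,\e_i)$ is supported on $\{\e : \e \preceq \e_i\}$ gives $g(\e_i) = \nu(\e_i)/\pi(\e_i) = \sum_{\e \preceq \e_i}\nu^\bullet(\e)/\bar H(\e)$. This is an $S_{\downarrow}$-type relation in $\e_i$, so the M\"obius inversion formula~(\ref{eq:Mob_inv_preceq}) yields $\nu^\bullet(\e_i)/\bar H(\e_i) = \sum_{\e \preceq \e_i} g(\e)\mu(\e,\e_i) = D_{\downarrow} g(\e_i)$, which is exactly the claimed formula. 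Non-negativity of $\nu^\bullet$ is then precisely hypothesis (i): $D_{\downarrow} g = \mathbf{g}\C^{-1} \ge 0$ is the definition of $g$ being $^\monU$-M\"obius monotone. That $\nu^\bullet$ is a probability vector follows, as before, from $\nu^\bullet = \nu\Lambda^\bullet$ together with the fact that $\Lambda^\bullet$ is a genuine transition kernel, since its $\e_j$-row sums to $\sum_{\e_i \succeq \e_j}\pi(\e_i)/\bar H(\e_j) = 1$.

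For the second identity, equating $\sum_{\e} \Lambda^\bullet(\e_i,\e)\PP(\e,\e_j)$ with $\sum_{\e} \PP^\bullet(\e_i,\e)\Lambda^\bullet(\e,\e_j)$, dividing by $\pi(\e_j)$, and substituting the time-reversal identity $\pi(\e)\PP(\e,\e_j)/\pi(\e_j) = \trev\PP(\e_j,\e)$ reduces the equation to $\bar H(\e_i)^{-1}\,\trev P(\e_j,\{\e_i\}^\monU) = \sum_{\e \preceq \e_j}\PP^\bullet(\e_i,\e)/\bar H(\e)$. Again this is an $S_{\downarrow}$-relation in $\e_j$ for each fixed $\e_i$, and inverting via~(\ref{eq:Mob_inv_preceq}) produces exactly the stated $\PP^\bullet$. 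The step needing the most care --- the analogue of the only genuinely non-formal point in Theorem~\ref{twr:main1} --- is showing $\PP^\bullet \ge 0$. Tracking indices, the matrix whose $(\e_i,\e_j)$-entry is $D_{\downarrow}\trev P(\e_j,\{\e_i\}^\monU)$ equals $\C\,\trev\PP^T\C^{-1}$, and since $(\C\,\trev\PP^T\C^{-1})^T = (\C^T)^{-1}\trev\PP\,\C^T$, non-negativity of the former is equivalent to $(\C^T)^{-1}\trev\PP\,\C^T \ge 0$, which is exactly hypothesis (ii), the $^\monU$-M\"obius monotonicity of $\trev\X$. Finally, that $\PP^\bullet$ is stochastic follows from the now-established intertwining $\Lambda^\bullet\PP = \PP^\bullet\Lambda^\bullet$, completing the verification of~(\ref{eq:duality}) and hence the construction of the SSD chain.
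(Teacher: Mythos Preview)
Your proposal is correct and is exactly the argument the paper intends: the paper does not give a proof of Corollary~\ref{twr:main2} but explicitly says the construction and proof are obtained ``in a similar way'' to Theorem~\ref{twr:main1}, which is precisely the order-reversing translation you carry out. One small slip (inherited from the paper's own proof of Theorem~\ref{twr:main1}): where you write ``$\nu^\bullet = \nu\Lambda^\bullet$'' to justify that $\nu^\bullet$ is a probability vector, the relation is of course $\nu = \nu^\bullet\Lambda^\bullet$, and the conclusion follows by summing this over $\e_i$ and using that the rows of $\Lambda^\bullet$ sum to $1$.
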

For $\E=\{1,\ldots,M\}$ with the linear ordering $\le$ we obtain the Theorem 4.6 from Diaconis and Fill \cite{DiaFil90} as a special case.  We use the fact that in this case $\mu(k,k)=1, \mu(k-1,k)=-1$ and $\mu$ equals $0$ otherwise.
\begin{coro}
 Let $\X\sim(\nu,\PP)$ be an ergodic Markov chain on a finite state space $\E=\{1,\ldots,M\}$,
linearly ordered by $\le$,
 with the
 stationary distribution $\pi$. Assume that
\begin{itemize}\label{eq:mupi_Mob}
 \item[(i)] $g(i)={\nu(i)\over \pi(i)}$  is non-increasing,
 \item[(ii)] $\trev\X$ is stochastically monotone.
\end{itemize}
Then there exists Strong Stationary Dual chain $\X^\dual\sim(\nu^\dual,\PP^\dual)$ on $\E^\dual=\E$ with the following link kernel
$$\Lambda(j,i)=\I(i\le j) {\pi(i)\over H(j)},$$
where $H(j)=\sum_{k:k\le j}\pi(k)$ . Moreover, the SSD is uniquely determined by
\begin{itemize}
\item[]
 $ \nu^\dual(i)=H(i)\sum_{k:k\ge i}\mu(i,k)g(k)=H(i)(g(i)-g(i+1)) ,\ \ \ \ \  i\in\E,$
 \item[]
 $$\PP^\dual(i,j)=
 \frac{H(j)}{ H(i)}\sum_{k:k\ge j} \mu(j,k)\trev P(k,\{1,\ldots ,i\})=$$$$\frac{H(j)}{ H(i)}(\trev P(j,\{1,\ldots,i\})-\trev P(j+1,\{1,\ldots,i\})), \ \  \  i,j\in \E.$$
\end{itemize}
\end{coro}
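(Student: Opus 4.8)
The plan is to derive this statement directly from Theorem~\ref{twr:main1} by specializing every ingredient to the linear order $\le$ on $\E=\{1,\ldots,M\}$. Since the order is linear, the unique maximal state required in Theorem~\ref{twr:main1} is automatically $\e_M=M$, so no extra hypothesis is needed there. The core of the argument is then purely computational: I would substitute the explicit M\"obius function of a chain into the three objects $\Lambda$, $\nu^\dual$ and $\PP^\dual$, and separately check that hypotheses (i) and (ii) of the corollary are exactly the linear-order transcriptions of hypotheses (i) and (ii) of the theorem.

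First I would record the M\"obius function of the chain, namely $\mu(k,k)=1$, $\mu(k-1,k)=-1$, and $\mu=0$ otherwise, as already noted in the text preceding the statement; this is verified by checking that $\sum_{i\le k\le j}\mu(i,k)=\delta_{ij}$, which telescopes. With this $\mu$ in hand the operator $D_\monU$ collapses to a difference of two consecutive terms: for any $f$ one has $\sum_{k\ge i}\mu(i,k)f(k)=f(i)-f(i+1)$. Applying this to $f=g$ shows that $^\monD$-M\"obius monotonicity of $g$, which by definition means $\sum_{k\ge i}\mu(i,k)g(k)\ge 0$ for all $i$, is exactly the requirement $g(i)\ge g(i+1)$, i.e. $g$ non-increasing; this identifies hypothesis (i) of the corollary with hypothesis (i) of the theorem.

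Next I would translate hypothesis (ii). For the linear order this is immediate from the fact recalled in Section~\ref{mobius} (Keilson and Kester) that on linearly ordered spaces conditions (\ref{mmdown}) and (\ref{mmup}) coincide and are equivalent to ordinary (strong) stochastic monotonicity; hence $^\monD$-M\"obius monotonicity of $\trev\X$ is precisely stochastic monotonicity of $\trev\X$. For a self-contained check one writes $\C^{-1}\trev\PP\C\ge 0$ entrywise, using the chain $\mu$, as $\trev P(j,\{1,\ldots,i\})-\trev P(j+1,\{1,\ldots,i\})\ge 0$, and then passes to complements: since every upper set has the form $\{m,\ldots,M\}$ with complement the down-set $\{1,\ldots,m-1\}$, this family of inequalities is equivalent to $\trev P(j,A)\le \trev P(j+1,A)$ for all upper sets $A$, which is condition (ii) of the monotonicity definition.

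Finally I would substitute into the formulas of Theorem~\ref{twr:main1}. The link $\Lambda(\e_j,\e_i)=\I(\e_i\preceq\e_j)\pi(\e_i)/H(\e_j)$ becomes $\Lambda(j,i)=\I(i\le j)\pi(i)/H(j)$ with $H(j)=\sum_{k\le j}\pi(k)$, verbatim. In $\nu^\dual(\e_i)=H(\e_i)\sum_{\e\succeq\e_i}\mu(\e_i,\e)g(\e)$ the inner sum telescopes to $g(i)-g(i+1)$, giving $\nu^\dual(i)=H(i)(g(i)-g(i+1))$; and in $\PP^\dual(\e_i,\e_j)=\frac{H(\e_j)}{H(\e_i)}\sum_{\e\succeq\e_j}\mu(\e_j,\e)\trev P(\e,\{\e_i\}^\monD)$ the inner sum telescopes to $\trev P(j,\{1,\ldots,i\})-\trev P(j+1,\{1,\ldots,i\})$, yielding the stated kernel. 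I do not expect a genuine obstacle here: the only point demanding care is the bookkeeping of directions---matching the down-set $\{1,\ldots,i\}$ appearing in $\{\e_i\}^\monD$ against the upper sets of the classical stochastic-monotonicity definition via complementation, and keeping the telescoping indices $i$ versus $i+1$ consistent throughout.
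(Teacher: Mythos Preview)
Your proposal is correct and follows exactly the approach of the paper: the paper presents the corollary as the specialization of Theorem~\ref{twr:main1} to the linear order, noting only that $\mu(k,k)=1$, $\mu(k-1,k)=-1$, and $\mu=0$ otherwise, and leaving the telescoping substitutions implicit. Your write-up simply supplies those substitutions and the (Keilson--Kester) identification of M\"obius monotonicity with stochastic monotonicity on a chain, so there is nothing to add.
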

An analog of the above result, corresponding to $\monU$-M\"obius monotonicity is as follows.
\begin{coro}
 Let $\X\sim(\nu,\PP)$ be an ergodic Markov chain on a finite state space $\E=\{1,\ldots,M\}$,
linearly ordered by $\le$,
 with the
 stationary distribution $\pi$. Assume that
\begin{itemize}\label{eq:mupi_Mob}
 \item[(i)] $g(i)={\nu(i)\over \pi(i)}$  is non-decreasing,
 \item[(ii)] $\trev\X$ is stochastically monotone.
\end{itemize}
Then there exists Strong Stationary Dual chain $\X^\bullet\sim(\nu^\bullet,\PP^\bullet)$ on $\E^\bullet=\E$ with the following link kernel
$$\Lambda^\bullet(j,i)=\I(i\ge j) {\pi(i)\over \bar H(j)},$$
where $\bar H(j)=\sum_{k:k\ge j}\pi(k)$ . Moreover, the SSD is uniquely determined by
\begin{itemize}
\item[]
 $ \nu^\bullet(i)=\bar H(i)\sum_{k:k\le i}\mu(k,i)g(k)=\bar H(i)(g(i)-g(i-1)) ,\ \ \ \ \  i\in\E,$
 \item[]
 $$\PP^\bullet(i,j)=
 \frac{\bar H(j)}{\bar H(i)}\sum_{k:k\le j} \mu(k,j)\trev P(k,\{i,\ldots ,M\})=$$$$\frac{\bar H(j)}{ \bar H(i)}(\trev P(j,\{i,\ldots,M\})-\trev P(j-1,\{i,\ldots,M\})), \ \  \  i,j\in \E.$$
\end{itemize}
\end{coro}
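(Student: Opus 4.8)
The plan is to derive this statement as the linearly ordered specialization of Corollary~\ref{twr:main2}: once the two hypotheses are seen to coincide with those of that corollary, the asserted link, initial law and transition matrix follow by substituting the explicit M\"obius function of a chain into its general formulas. So the work splits into a translation of hypotheses and a routine evaluation of the M\"obius sums, with essentially no analysis beyond bookkeeping.

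First I would record the M\"obius function of the linearly ordered $\E=\{1,\dots,M\}$: as noted just before the preceding corollary, $\mu(i,i)=1$, $\mu(i,i+1)=-1$, and $\mu(i,j)=0$ otherwise. Equivalently $\C$ is upper triangular with unit entries on and above the diagonal, and $\C^{-1}$ carries $1$ on the diagonal and $-1$ on the superdiagonal. With this in hand the two hypotheses of Corollary~\ref{twr:main2} unwind. A function is $^\monU$-M\"obius monotone iff $\mathbf{g}\C^{-1}\ge 0$, and $(\mathbf{g}\C^{-1})_i=\sum_k g(k)\mu(k,i)=g(i)-g(i-1)$, so $^\monU$-M\"obius monotonicity of $g$ is exactly the requirement that $g$ be non-decreasing, matching (i). Similarly $^\monU$-M\"obius monotonicity of $\trev\PP$ means $(\C^T)^{-1}\trev\PP\,\C^T\ge 0$, i.e. condition (\ref{mmup}); for a linear order this coincides with ordinary strong stochastic monotonicity of $\trev\X$ by Keilson and Kester \cite{keilson;kester:77}, matching (ii). A linear order also supplies the unique minimal state $1$ required by Corollary~\ref{twr:main2}.

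With the hypotheses verified, I would simply instantiate the conclusions of Corollary~\ref{twr:main2}. The link $\Lambda^\bullet(\e_j,\e_i)=\I(\e_i\succeq\e_j)\pi(\e_i)/\bar H(\e_j)$ becomes $\Lambda^\bullet(j,i)=\I(i\ge j)\pi(i)/\bar H(j)$ with $\bar H(j)=S_{\monU}\pi(j)=\sum_{k\ge j}\pi(k)$. For the initial law, the general expression $\nu^\bullet(\e_i)=\bar H(\e_i)\sum_{\e\preceq\e_i}g(\e)\mu(\e,\e_i)$ collapses, since only $\mu(i,i)=1$ and $\mu(i-1,i)=-1$ survive, to $\nu^\bullet(i)=\bar H(i)(g(i)-g(i-1))$. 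For the transition matrix I would set $\{\e_i\}^\monU=\{i,\dots,M\}$ and again use that the M\"obius sum reduces to a two-term difference, so the general formula $\PP^\bullet(\e_i,\e_j)=\frac{\bar H(\e_j)}{\bar H(\e_i)}\sum_{\e\preceq\e_j}\trev P(\e,\{\e_i\}^\monU)\mu(\e,\e_j)$ becomes $\PP^\bullet(i,j)=\frac{\bar H(j)}{\bar H(i)}\bigl(\trev P(j,\{i,\dots,M\})-\trev P(j-1,\{i,\dots,M\})\bigr)$, as claimed.

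The only point requiring care---the main obstacle, such as it is---lies in the hypothesis translation rather than the computation: namely the identification of $^\monU$-M\"obius monotonicity of the time-reversed chain with its ordinary strong stochastic monotonicity. This rests on the fact, special to linearly ordered state spaces, that conditions (\ref{mmdown}) and (\ref{mmup}) coincide with one another and with strong stochastic monotonicity (Keilson and Kester \cite{keilson;kester:77}); on a genuinely partially ordered space these notions diverge, as the paper emphasizes. Everything else is bookkeeping with the three-valued M\"obius function, and no separate positivity or normalization checks are needed, since these are already guaranteed by Corollary~\ref{twr:main2}.
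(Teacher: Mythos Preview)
Your proposal is correct and follows exactly the route the paper intends: the corollary is stated in the paper as the $^\monU$-analog of the preceding linearly ordered corollary, itself obtained from the general result by substituting the explicit M\"obius function $\mu(k,k)=1$, $\mu(k-1,k)=-1$, $\mu=0$ otherwise. Your translation of the hypotheses---$^\monU$-M\"obius monotonicity of $g$ reducing to $g(i)-g(i-1)\ge 0$, and $^\monU$-M\"obius monotonicity of $\trev\PP$ coinciding with strong stochastic monotonicity via Keilson--Kester on a chain---and your two-term evaluation of the M\"obius sums are precisely the bookkeeping the paper leaves implicit.
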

{\bf Remarks.}

1. In Theorem \ref{twr:main1} we have $\Lambda(\e_M,\cdot)=\pi$ and  $\e_M$ is
an absorbing state for $\PP^\dual$. Moreover, if the orignal chain
starts with probability 1 in the minimal state, i.e. $\nu=\delta_{\e_1}$, so does the
dual chain. In Corollary \ref{twr:main2} we have $\Lambda^\bullet(\e_1,\cdot)=\pi$ and  $\e_1$ is
an absorbing state for the dual $\PP^\bullet$. Moreover, if the orignal chain
starts with probability 1 in the maximal state, i.e. $\nu=\delta_{\e_M}$, so does the
dual chain.

2. A well-known theorem, usually attributed to Keilson, states that, for an irreducible continuous-time birth-and-death chain on $\E=\{0,\ldots,M\}$, the passage time from state 0 to state M is distributed as a sum of M independent exponential random variables. Fill \cite{Filla} uses the theory of strong stationary duality to give a stochastic proof of an analogous result for discrete time birth and death chains and geometric random variables. He shows a link for the parameters of the distributions to eigenvalue information about the chain. The obtained dual is a pure birth chain.

3. An (upward) skip-free Markov chain with the set of nonnegative integers as state space is a chain for which upward jumps may be only of unit size; there is no restriction on downward jumps. In Brown and Shao \cite{BroShao} determined, for an irreducible continuous-time skip-free chain and any $M$, the passage time distribution from state $0$ to state $M$. When the  eigenvalues  of the generator  are all real, their result states that the passage time is distributed as the sum of $M$ independent exponential random variables with rates equal to the eigenvalues . Fill \cite{Fillb}  gives another proof of this theorem. In the case of birth-and-death chains, this proof leads to an explicit representation of the passage time as a sum of independent exponential random variables. Diaconis and Miclo \cite{DiaMiclo} recently obtained  such a representation, using an involved duality construction.


\subsection{Nearest neighbor M\"obius monotone walks on cube: one station repair or failure}
Consider discrete time Markov chain ${\bf X}=\{X_n, n\geq 0\}$, $\E=\{0,1\}^d$ with the transition matrix $\PP$ given by,

\begin{equation}\label{rw_cube}
\begin{array}{rclcl}
 \PP(\e,\e+\s_i) & = & \alpha_i \I_{\{e_i=0\}}, \\
\\
 \PP(\e,\e-\s_i) & = & \beta_i \I_{\{  e_i=1\}}, \\
\\
 \PP(\e,\e) & = &\displaystyle 1-\sum_{i:e_i=0}\alpha_i- \sum_{i:e_i=1}\beta_i,   \\
\end{array}
\end{equation}
where $\e=(e_1,\ldots,e_d)\in \E, \ \  e_i\in\{0,1\}$ and  $\s_i=(0,\ldots,0,1,0,\ldots,0)$ with $1$ at the position $i$.

Assume that $\alpha_i$ and $\beta_i$ are such that the chain is ergodic.

This  chain is time-reversible with the stationary distribution
\begin{equation}\label{eq:cube_3d_stat_dist}
\pi(x)=\prod_{i:x_i=1}{\alpha_i\over \alpha_i+\beta_i} \prod_{i:x_i=0}{\beta_i\over \alpha_i+\beta_i}.
\end{equation}

We use the following partial ordering: $\e=(e_1,e_2,\ldots,e_d) \preceq \e' = (e'_1,e'_2,\ldots,e'_d)$ if $e_i\leq e'_i$, for all $i=1,\ldots d$.   Let $|\e|=\sum_{i=1}^d e_i$. For this ordering M\"obius function is known and is given by
$$
\mu(\e,\e')=\left\{
\begin{array}{lll}
(-1)^{|\e'|-|\e|} & \mbox{if } \e\preceq \e', \\[7pt]
0 & \mbox{otherwise.}
\end{array}
\right.
$$
We shall calculate

$$\PP^\dual(\e_i,\e_j)=
 \frac{H(\e_j)}{ H(\e_i)}\sum_{\e:\e\succeq \e_j} \mu(\e_j,\e)\trev P(\e,\{\e_i\}^\monD)$$

and find conditions for its non-negativity.
$$\PP^*((0,\ldots,0),(0,\ldots,0))=\sum_{\e\succeq (0,\ldots,0)} \mu((0,\ldots,0),\e)\trev P(\e,\{(0,\ldots,0)\}^\monD)$$

 $$=1-(\alpha_1+\ldots+\alpha_d)-\beta_1-\ldots-\beta_d=1-\sum_{i=1}^d\alpha_i-\sum_{i=1}^d\beta_i$$
Thus, we must have $$\sum_{i=1}^d\alpha_i+\sum_{i=1}^d\beta_i\leq 1.$$
Note that this condition is equivalent to the condition that all eigenvalues of $\PP$ are non-negative.

Fix $\s_{i}=(0,\ldots,0,1,0,\ldots,0)$ with $1$ on the position $i$. Then
$$\PP^*(\s_{i},\s_{i})=\sum_{\e:\e\succeq \s_{i}} \mu(\s_{i},\e) \trev P(\e,\{\s_i\}^\monD)=1-\sum_{k=1}^d\alpha_k+\alpha_i-\sum_{k=1}^d{\beta_k}+\beta_i.$$

For each state of the form  $\e^i=(e_1,\ldots,e_{i-1},0,e_{i+1},\ldots,e_d),$
$$\PP^*(\e^i,\e^i+\s_i)={H(\e^i+\s_i)\over H(\e^i)}\sum_{\e:\e\succeq \e^i+\s_i} \mu(\e^i+\s_i,\e)\trev P(\e ,\{ \e^i\}^\monD)
$$
$$={H(\e^i+\s_i)\over H(\e^i)}\left( \mu(\e^i+\s_i,\e^i+\s_i)\trev P(\e^i+\s_i ,\{ \e^i\}^\monD)\right)={H(\e^i+\s_i)\over H(\e^i)}\beta_i$$

Denote by $z(\e)=\{k: e_k=0\}$, the index set of zero coordinates.
We shall compute ${H(\e^i+\s_i)\over H(\e^i)}$. Let $G=\prod_{j=1}^d(\alpha_j+\beta_j)$.

$$H(\e^i)=\sum_{\e:\e\preceq \e^i} \pi(\e)={1\over G} \sum_{\e:\e\preceq \e^i} \prod_{k: e_k=1} \alpha_k \prod_{k: e_k=0} \beta_k={1\over G} \prod_{k: e^i_k=0} \beta_k \left(\sum_{A\subseteq \{1,\ldots,d\}\setminus z(\e^i)} \prod_{j\in A} \alpha_j \prod_{j\in A^C} \beta_j\right)$$
$$H(\e^i+\s_i)=\sum_{\e\preceq \e^i+\s_i} \pi(\e)=H(\e^i)+\sum_{\e\preceq\e^i+\s_i\atop e_i=1} \pi(\e)$$
$$=H(\e^i)+{1\over G} \prod_{k: e_k=0} \beta_k \alpha_i \left(\sum_{A\subseteq \{1,\ldots,d\}\setminus z(\e^i)} \prod_{j\in A} \alpha_j \prod_{j\in A^C} \beta_j\right)$$
$$=H(\e^i)+{1\over G} \prod_{k: e^i_k=0} \beta_k {\alpha_i\over \beta_i} \left(\sum_{A\subseteq \{1,\ldots,d\}\setminus z(\e^i)} \prod_{j\in A} \alpha_j \prod_{j\in A^C} \beta_j\right)$$
And thus
$${H(\e^i+\s_i)\over H(\e^i)}=1+{\alpha_i\over \beta_i}={\alpha_i+\beta_i\over \beta_i},$$
and
$$
\PP^*(\e^i,\e^i+\s_i)=\alpha_i+\beta_i.
$$

Now, fix some $^i\e=\{e_1,\ldots,e_{i-1},1,e_{i+1},\ldots,e_d\}$.
$$\PP^*(^i\e, ^i\e-\s_i)={H(^i\e-\s_i)\over H(^i\e)}\sum_{\e:\e\succeq \ ^i\e-\s_i} \mu(^i\e-\s_i,\e)\trev P(\e , \{^i\e\}^\monD)
$$
Fix $j\in\{1,\ldots,d\}\setminus z(^i\e)$. The following cases are possible:
$$
\begin{array}{lllllll}
 \e=\ ^i\e-\s_i :&\quad & \mu(^i\e-\s_i,\ ^i\e-\s_i)\trev P(^i\e-\s_i,\{^i\e\}^\monD) &=\displaystyle 1-\sum_{k\in z(^i\e)} \alpha_k \\
 \e=\ ^i\e :&\quad & \mu(^i\e-\s_i,\ ^i\e)\trev P(^i\e,\{^i\e\}^\monD) &=\displaystyle - \left(1-\sum_{k\in z(^i\e)} \alpha_k\right)\\[6pt]
 \e=\ ^i\e-\s_i+\s_j :&\quad & \mu(^i\e-\s_i,\ ^i\e-\s_i+\s_j)\trev P(\ ^i\e-\s_i+\s_j,\{^i\e\}^\monD) &=\displaystyle - \beta_j \\[7pt]
 \e=\ ^i\e+\s_j :&\quad & \mu(^i\e-\s_i,^i\e+\s_j)\trev P(\ ^i\e+\s_j,\{^i\e\}^\monD) &=\displaystyle \beta_j \\
\end{array}
$$

Summing up all possibilities we get
$$\PP^*(^i\e, ^i\e-\s_i)=0.$$

For each $\e$ we have
$$\PP^\dual(\e,\e)=\sum_{\e':\e'\succeq \e} \mu(\e,\e')\trev P(\e',\{\e\}^\monD)$$
$$
\begin{array}{lllllll}
 \e'=\e :&\quad & \mu(\e,\e)\trev P(\e,\{\e\}^\monD) &=\displaystyle 1-\sum_{i\in z(\e)} \alpha_i \\
 \e'=\e+\s_i :&\quad & \mu(\e,\e+\s_i)\trev P(\e+\s_i,\{\e\}^\monD) &=\displaystyle -1\cdot \beta_i.\\
\end{array}
$$

Therefore we get
$$\PP^\dual(\e,\e)=\mu(\e,\e)\trev P(\e,\{\e\}^\monD)+\sum_{i\in z(\e)}
\mu(\e,\e+s_i)\trev P(\e+\s_i,\{\e\}^\monD)=1-\sum_{i\in z(\e)}(\alpha_i+\beta_i).$$

It is interesting that the dual (absorbing)  chain here is a chain which jumps only upwards to neighboring states or stay at the same state. This structure of the dual chain allows to read all eigenvalues for $\PP$ and $\PP^\dual$ (from the diagonal of $\PP^\dual$ ) since the dual matrix  $\PP^\dual$ is an upper-triangular matrix. The symmetric walk $\alpha_i=\beta_i=(1-r)/d,$ was considered by Diaconis and Fill \cite{DiaFil90}. They used the symmetry to reduce the problem to a birth and death chain. The non-symmetric case was studied by Brown \cite{Brown90a} were the eigenvalues were identified by a different method. Using Brown \cite{Brown90a} we can reformulate his result as follows.

\begin{twr}\label{twr:cube_st_monot_cond1} Suppose ${\bf X}=\{X_n,n\geq 0 \}$ is  defined by (\ref{rw_cube}). Assume that $\sum_{i=1}^d\alpha_i+\sum_{i=1}^d\beta_i\leq 1.$ Define
$A_k$ to be a set of  ${d \choose k}$ subsets of size $k$ from $\{1,\ldots,d\}$ and $s_\gamma=\sum_{i\in \gamma} (\alpha_i +\beta_i)$ for $\gamma$ a subset of ${1,\ldots,d}$.
Then for the separation distance, and $\delta_{(0,\ldots,0)}$ the atomic measure at $(0,\ldots,0)$ we have
\begin{eqnarray}
 (i) &\hspace{20pt}\displaystyle s(\delta_{(0,\ldots,0)}\PP^n,\pi)=\sum_{k=1}^d (-1)^{k-1} \sum_{\gamma\in A_k} (1-s_\gamma)^n, \qquad n\geq  1,\label{twr:cons_cube_wyl_sep}\\
(ii) &\hspace{20pt} {\mathrm all\ } 2^d {\mathrm \ eigenvalues \ of \ } \PP {\mathrm \ are \ } \{1-s_\gamma,\gamma\subseteq\{1,\ldots,d\}\}.\label{twr:cons_cube_egen}
\end{eqnarray}
\end{twr}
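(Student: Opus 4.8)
The plan is to read both assertions directly off the dual chain $\PP^\dual$ whose entries were just computed in this subsection, using the duality relation $\Lambda\PP=\PP^\dual\Lambda$ of Theorem~\ref{twr:main1}. The starting point is that, with $\nu=\delta_{(0,\ldots,0)}$ and the chain being reversible (so $\trev\PP=\PP$), all hypotheses of Theorem~\ref{twr:main1} hold once $\sum_i\alpha_i+\sum_i\beta_i\le 1$: that inequality is exactly what makes $\PP^\dual\ge 0$ (equivalently $\C^{-1}\PP\C\ge 0$), and $g=\nu/\pi$ is $^\monD$-M\"obius monotone because, as noted in the first remark following Corollary~\ref{twr:main2}, a dual started at the minimal state again starts at the minimal state, i.e.\ $\nu^\dual=\delta_{(0,\ldots,0)}$. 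Thus $\X^\dual$ is a genuine absorbing chain on the cube, starting at $(0,\ldots,0)$, absorbed at the maximal state $\e_M=(1,\ldots,1)$, with $\Lambda(\e_M,\cdot)=\pi$ and $H(\e_M)=1$.

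For part (ii) I would argue by similarity. Formula (\ref{eq:main1_matrix_trans}) exhibits $\PP^\dual=A\PP A^{-1}$ with $A=\diag(\pi\C)^{-1}\C^T\diag(\pi)$ invertible, so $\PP$ and $\PP^\dual$ share the same characteristic polynomial. Since the dual only moves upward ($\e\to\e+\s_i$) or holds, $\PP^\dual$ is upper-triangular in the order-consistent enumeration, so its eigenvalues are its diagonal entries, already computed to be $\PP^\dual(\e,\e)=1-\sum_{i\in z(\e)}(\alpha_i+\beta_i)=1-s_{z(\e)}$. As $\e$ ranges over $\{0,1\}^d$ the zero-set $z(\e)$ ranges over all subsets $\gamma\subseteq\{1,\ldots,d\}$, giving the eigenvalues $\{1-s_\gamma:\gamma\subseteq\{1,\ldots,d\}\}$, which is (ii).

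For part (i) the first step is the identity $s(\delta_{(0,\ldots,0)}\PP^n,\pi)=P(T^\dual>n)$. Iterating the link relation gives $\nu\PP^n=\nu^\dual(\PP^\dual)^n\Lambda$, whence $\nu\PP^n(\e_i)/\pi(\e_i)=\sum_{\e_j\succeq\e_i}\nu^\dual(\PP^\dual)^n(\e_j)/H(\e_j)$; since every summand is nonnegative and $\e_M\succeq\e_i$ for all $i$, the minimum over $\e_i$ is attained at $\e_i=\e_M$, where it equals $\nu^\dual(\PP^\dual)^n(\e_M)=P(T^\dual\le n)$ (using $H(\e_M)=1$ and absorption). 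Hence $s=1-\min_{\e}\nu\PP^n(\e)/\pi(\e)=P(T^\dual>n)$. The second step computes this tail by inclusion-exclusion. Writing $B_i$ for the event that coordinate $i$ of $\X^\dual$ still equals $0$ at time $n$, absorption means all coordinates are $1$, so $\{T^\dual>n\}=\bigcup_i B_i$. The key lemma is that for every $\gamma\subseteq\{1,\ldots,d\}$,
$$P\Big(\bigcap_{i\in\gamma}B_i\Big)=(1-s_\gamma)^n,$$
which I would prove via the one-step identity $\EE[M_{t+1}\mid\mathcal F_t]=(1-s_\gamma)M_t$ for $M_t=\prod_{i\in\gamma}\I(\,i\text{-th coordinate of }X^\dual_t\text{ equals }0)$: the transition law $\PP^\dual(\e,\e+\s_i)=\alpha_i+\beta_i$ holds whenever $e_i=0$ \emph{independently of the other coordinates}, and flips of distinct coordinates are disjoint one-step events, so on $\{M_t=1\}$ no $\gamma$-coordinate flips with probability $1-\sum_{i\in\gamma}(\alpha_i+\beta_i)=1-s_\gamma$. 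Since $M_0=1$, this yields $\EE[M_n]=(1-s_\gamma)^n$. Inclusion-exclusion then gives $P(T^\dual>n)=\sum_{\emptyset\neq\gamma}(-1)^{|\gamma|-1}(1-s_\gamma)^n=\sum_{k=1}^d(-1)^{k-1}\sum_{\gamma\in A_k}(1-s_\gamma)^n$, which is (i).

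The main obstacle is exposing the product structure hidden in the dual: the crux is recognizing that each coordinate of $\X^\dual$ turns from $0$ to $1$ at a state-\emph{independent} per-step probability $\alpha_i+\beta_i$, so that absorption is governed by $d$ essentially decoupled waiting times and $\bigcap_{i\in\gamma}B_i$ carries the clean weight $(1-s_\gamma)^n$. Granting this product identity—and the passage $s=P(T^\dual>n)$, for which the only nontrivial point is that the minimum of $\nu\PP^n/\pi$ sits at the absorbing top state $\e_M$, a consequence of the triangular link $\Lambda$—both the separation formula and the eigenvalue identification reduce to the bookkeeping above.
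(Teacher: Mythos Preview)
Your proposal is correct and follows essentially the same route the paper takes: read the eigenvalues off the diagonal of the upper-triangular dual $\PP^\dual$ (as the paper notes just before the theorem), and identify the absorption time $T^\dual$ with the waiting time in a multinomial coupon-collector, whose tail is computed by inclusion--exclusion (this is exactly the paper's paragraph after the theorem, where it sets $p_i=\alpha_i+\beta_i$ and recognizes $P(T>n)=P(\cup_i C_i)$). Your martingale lemma for $P(\cap_{i\in\gamma}B_i)=(1-s_\gamma)^n$ is just a rephrasing of the paper's multinomial-trial observation. The one place you are more explicit than the paper is the \emph{equality} $s(\delta_{(0,\ldots,0)}\PP^n,\pi)=P(T^\dual>n)$: the paper states this without justification (citing Brown), whereas you supply the halting-state argument via $\nu\PP^n(\e)/\pi(\e)=\sum_{\e_j\succeq\e}\nu^\dual(\PP^\dual)^n(\e_j)/H(\e_j)$ being minimized at $\e=\e_M$.
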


We recognize $(i)$ as an inclusion-exclusion formula:
consider $n$ multinomial trials with cell probabilities $p_i=\alpha_i+\beta_i$, $i=1,\ldots,d,$
and $p_{d+1}=1-\sum_{i=1}^d (\alpha_i+\beta_i)$. Let $A_n$ be the event that at least one of the cells from $1,\ldots,d$ is empty.
Then $A_n=\cup_{i=1}^d C_i$, where $C_i=1$ if cell $i$ is empty, 0 otherwise.
Then (i) represents the inclusion-exclusion formula for $P(\cup_{i=1}^d C_i)$.
Thus if we denote $T$ to be the waiting time for all of cells $1,\ldots,d$ to be occupied, then
$$s(\delta_{x_{min}} \PP^n,\pi)=P(T>n)=P(A_n).$$

\medskip
\begin{example}\rm
 Consider the following random walk on the two-dimensional
cube: $\E=\{0,1\}^2$. We define on $\E$ the   partial ordering: for all $\e=(e_1,e_2)\in \E$, $\e'=(e'_1,e'_2)\in \E$

 $\e\preceq \e'\iff e_1\leq e_1', e_2\leq e_2'$.

We consider the transition matrix under the state space enumeration:

$$\e_1=(0,0), \e_2=(1,0), \e_3=(0,1), \e_4=(1,1),$$

of the form
$$\PP=\left[
\begin{array}{cccc}
 1-\alpha_1-\alpha_2 & \alpha_1 & \alpha_2 & 0 \\
\beta_1 & 1-\beta_1-\alpha_2 & 0 & \alpha_2 \\
\beta_2 &  0   & 1-\alpha_1-\beta_2  & \alpha_1 \\
0 & \beta_2 & \beta_1 & 1-\beta_1-\beta_2\\
\end{array}\right].
$$
We assume that $\alpha_1,\alpha_2,\beta_1,\beta_2$ are  positive and
that there exists $ B\subseteq \E$ such that $\sum_{k\in B} \alpha_k
+\sum_{k\in B^c} \beta_k>0$ (which assures irreducibility of the chain).

The zeta, and the M\"obius functions are represented by

$$\C=\left[
\begin{array}{rrrr}
 1 & 1 & 1 & 1  \\
 0 & 1 & 0 & 1\\
 0 & 0 & 1 & 1\\
 0 & 0 & 0 & 1\\
\end{array}\right]
\quad
\C^{-1}=\left[
\begin{array}{rrrr}
 1 & -1 & -1 & 1 \\
 0 & 1 & 0 & -1\\
 0 & 0 & 1 & -1\\
0 & 0 & 0 & 1\\
\end{array}\right].
$$
\begin{itemize}
 \item[] For $^\monU$-M\"obius monotonicity we have
$$
(\C^T)^{-1} \PP \C^T =
\left[
\begin{array}{cccc}
1 & \alpha_1 & \alpha_2 & 0\\
0 & 1-\alpha_1-\beta_1 & 0 & \alpha_2 \\
0 & 0 & 1-\alpha_2-\beta_2 & \alpha_1\\
0 & 0 & 0 &  1-\alpha_1-\alpha_2-\beta_1-\beta_2 \\
\end{array}\right].
$$
$\PP$ is $^\monU$-M\"obius monotone iff
\begin{equation}\label{eq:cube2d_mob_mon}
\sum_i (\alpha_i+\beta_i)\leq 1.
\end{equation}

 \item[] For $^\monD$-M\"obius monotonicity we have
$$\C^{-1}\PP \C=
\left[
\begin{array}{cccc}
 1-\alpha_1-\alpha_2-\beta_1-\beta_2 & 0 & 0 & 0 \\
\beta_1 & 1-\alpha_2-\beta_2 & 0 & 0\\
\beta_2 &0& 1-\alpha_1-\beta_1 & 0\\
0 & \beta_2 &\beta_1 & 1\\
\end{array}\right].
$$
 Again, $\PP$ is $^\monD$-M\"obius monotone iff $\sum_i (\alpha_i+\beta_i)\leq 1$. Notice that the above matrix is similar to $\PP$ and have an upper-triangular form, so the eigenvalues of $\PP$ are given on the diagonal of this matrix. If we assume that $\alpha_1=\alpha_2=\alpha$, and $\beta_1=\beta_2=\beta$ then the dual matrix $\PP^\dual$ has also a simple form (again with the eigenvalues on the diagonal),

\begin{maplelatex}
\mapleinline{inert}{2d}{Matrix(
\end{maplelatex}

We have moreover

\begin{maplelatex}
\mapleinline{inert}{2d}{Matrix(
\end{maplelatex}
\end{itemize}
\end{example}

\subsection{More general walks on cube}

Consider first the  random walk on the three-dimensional
cube: $\E=\{0,1\}^3$. We define on $\E$ the   partial ordering: for all $\e=(e_1,e_2,e_3)\in \E$, $\e'=(e'_1,e'_2,e'_3)\in \E$, $\e\preceq \e'$ iff $e_1\leq e_1', e_2\leq e_2', e_3\leq e'_3$.

We consider the transition matrix under the state space enumeration:

$\e_1=(0,0,0), \e_2=(1,0,0), \e_3=(0,1,0), \e_4=(0,0,1), \e_5=(1,1,0),\e_6=(1,0,1),\e_7=(0,1,1),\e_8=(1,1,1)$
of the form

\begin{maplelatex}\tiny
\mapleinline{inert}{2d}{Matrix(
\end{maplelatex}

\bigskip
with the dual $\PP_1^\dual$

\bigskip
\begin{maplelatex}\tiny
\mapleinline{inert}{2d}{Matrix(
\end{maplelatex}

One possibility to extend the model to allow up-down jumps not only to neighboring states, that is to model repairs and failures of more than one station at one transition of the process, is to take powers of the nearest neighbor transitions matrix $\PP_1$. The matrix $\PP=\PP_1^2$ is again M\"obius monotone, and has the dual with an upper-triangular form. To be able to present this matrix in display form we take $\alpha=\beta$.

\medskip
\begin{maplelatex}\tiny
\mapleinline{inert}{2d}{Matrix(
\end{maplelatex}

\bigskip
Another way to modify the nearest neighbor walk is to transform some rows of $\PP$ to get distributions bigger in the supermodular ordering. To be more precise, recall that
we say that two random elements $X,Y$ of $\E$ are supermodular
stochastically ordered (and write $X\prec_{sm}Y$ or $Y\succ _{sm}
X$) if $ E f(X) \leq E f(Y)$ for all supermodular
functions, i.e. functions which
fulfill for all $x,y\in \E$
$$f(x\wedge y) +
f(x\vee y) \geq f(x)+ f(y).$$

\bigskip
A simple sufficient criterion for $\prec_{sm}$ order for $\E$
which is a discrete (countable) lattice is given as follows.
\begin{lem}\label{p2.8.a}
Let  $P_1$ be  a probability measure on  a discrete lattice
ordered space $\E$ and assume that for not comparable points
$x\neq y\in \E$ we have $P_1(x) \geq \kappa$ and $P_1(y)\geq
\kappa$ for some $\kappa
> 0$. Define a new probability measure $P_2$ on  $\E$ by
\begin{eqnarray}\label{2.7.a}
P_2(x) =  {} ~P_1(x) - \kappa &\qquad\qquad&
P_2(x\vee y) =  {}~ P_1(x\vee y) +  \kappa \nonumber\\
P_2(y) =  {}~ P_1(y) - \kappa &\qquad\qquad&
P_2(x\wedge y) =  {} ~P_1(x\wedge y) + \kappa \nonumber\\
P_2(z) =  {} ~P_1(z) \qquad&\text{otherwise}.&
\end{eqnarray}
Then $P_1\prec_{sm} P_2$.\\
\end{lem}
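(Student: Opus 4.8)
The plan is to verify the defining inequality of the supermodular order directly: I want to show that $\sum_{z\in\E}f(z)P_1(z)\le \sum_{z\in\E}f(z)P_2(z)$ for every supermodular $f$, and I will do this by evaluating the signed difference of the two expectations and recognizing the supermodularity defect on the right-hand side. The whole argument should be a one-line computation once the support of the signed measure $P_2-P_1$ is understood.

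First I would form the signed measure $P_2-P_1$. By the definition of $P_2$ given in the statement, this measure vanishes off the four points $x$, $y$, $x\vee y$ and $x\wedge y$, where it takes the values $-\kappa$, $-\kappa$, $+\kappa$ and $+\kappa$ respectively. The key structural point, which I would record next, is that these four points are pairwise distinct: since $x$ and $y$ are incomparable we have $x\vee y\neq x,y$ (otherwise one of $x,y$ would dominate the other), likewise $x\wedge y\neq x,y$, and $x\vee y=x\wedge y$ would force $x=y$. Because of this distinctness the four contributions neither coincide nor cancel, so that
$$\sum_{z\in\E}f(z)\bigl(P_2(z)-P_1(z)\bigr)=\kappa\bigl(f(x\vee y)+f(x\wedge y)-f(x)-f(y)\bigr).$$

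The right-hand side is nonnegative by supermodularity of $f$ together with $\kappa>0$, which is exactly the inequality $\sum_z f(z)P_1(z)\le \sum_z f(z)P_2(z)$; since $f$ was an arbitrary supermodular function, this yields $P_1\prec_{sm}P_2$. For completeness I would also confirm that $P_2$ is a genuine probability measure: its total mass equals that of $P_1$ because $\kappa$ is subtracted twice and added twice, and nonnegativity holds since the hypotheses $P_1(x)\ge\kappa$ and $P_1(y)\ge\kappa$ keep $P_2(x),P_2(y)\ge 0$ while the remaining modifications only add mass. The computation itself is entirely routine; the only step needing genuine care is the distinctness of $x$, $y$, $x\vee y$, $x\wedge y$, because without incomparability the signed measure could be supported on fewer than four points and the clean supermodular defect above would fail to materialize.
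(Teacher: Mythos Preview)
Your argument is correct and is exactly the natural direct verification. Note, however, that the paper states this lemma without proof (it is attributed implicitly to the literature, and in the cube case to Li and Xu's pairwise $g^+$ transform), so there is no ``paper's own proof'' to compare against; your computation is the standard one-line justification.
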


If in Lemma \ref{p2.8.a} the state space $\E$ is the set of all
subsets of a finite set (i.e. the cube) than
the transformation described in \eqref{2.7.a} is called in Li and
Xu \cite{xuli} a {\em  pairwise} $g^+$ {\em transform} and
Lemma \ref{p2.8.a} specializes then to Proposition 5.5., Li and Xu
\cite{xuli}.

If we modify rows numbered 1,3,6,8 by such a transformation (notice that $\e_1,\e_3,\e_6,\e_8$ lie on a symmetry axis), that is we consider an up-down walk which allows jumps not only to the nearest neighbors with the transition matrix

\bigskip
\begin{maplelatex}\tiny
\mapleinline{inert}{2d}{Matrix(
\end{maplelatex}

\bigskip
then the dual matrix again has an upper-triangular form

\bigskip
\begin{maplelatex}\tiny
\mapleinline{inert}{2d}{Matrix(
\end{maplelatex}

\bigskip
which allows to read the eigenvalues ( for appropriate selection of $\alpha$ and $\kappa$).

There are several other examples of chains which are M\"obius monotone, with transitions to non-comparable states on the cube, and walks on some other state spaces, but we shall study this topic  in a subsequent paper.


\begin{thebibliography}{10}

\bibitem{aldous88}
Aldous, D.J.
\newblock Finite-Time Implications of Relaxation Times
for Stochastically Monotone Processes.
\newblock {\em Probab. Th. Rel. Fields}, {\bf 77}, 137-145, (1988)

\bibitem{AldDia86}
Aldous, D.J., Diaconis, P.
\newblock Shuffling cards and stopping times.
\newblock {\em American Mathematical Monthly}, \textbf{93}, 333--348, (1986)

\bibitem{AldDia87}
Aldous, D.J., Diaconis, P.
\newblock Strong uniform times and finite random walks.
\newblock {\em Advances in Applied Mathematics}, \textbf{8}, 69--97, (1987).


\bibitem{Brown90a}
Brown, M.
\newblock Consequences of Monotonicity for Markov Transition Functions.
\newblock {\em Technical report, City college, CUNY},  (1990).


\bibitem{BroShao}
Brown, M. and Shao, Y. S.
\newblock Identifying coefficients in the spectral representation for first passage time distributions.
\newblock {\em Probab. Eng. Inform. Sci.} \textbf{1}, 69-74, (1987)

\bibitem{DadSze08}
Daduna, H. and Szekli, R.
\newblock Impact of routing on correlation strength in stationary queueing network processes.
 {\em Journal of Applied Probability} {\bf 45}, 846--878, (2008)


\bibitem{Diaconis_book}
Diaconis, P.
\newblock {\em Group Representations in Probability and Statistics}.
\newblock IMS, Hayward, CA. (1988)

\bibitem{DiaFil90}
Diaconis, P., Fill, J.A.
\newblock Strong stationary times via a new form of duality.
\newblock {\em The Annals of Probability}, \textbf{18}, 1483-1522, (1990).

\bibitem{DiaMiclo}
Diaconis, P. and Miclo, L.
\newblock On times to quasi-stationarity for birth and death processes.
\newblock {\em  Journal of Theoretical Probability} {\bf 22}, 558-586, (2009)


\bibitem{dieker}
Dieker, A. B. and  Warren J.
Series Jackson networks and non-crossing probabilities .
\newblock {\em Mathematics of Operations Research}, {\bf 35},  257-266, (2010)

\bibitem{falin}
Falin, G. I.
\newblock Monotonicity of random walks in partially ordered sets.
\newblock{Russian Mathematical Surveys}
\textbf{ 43}, ,167-168, (1988)

\bibitem{Filla}
Fill, J. A.
\newblock The passage time distribution for a birth-and-death chain:Strong stationary duality gives a first stochastic proof.
\newblock {\em  Journal of Theoretical Probability} {\bf 22}, 543-557, (2009a)

\bibitem{Fillb}
Fill, J. A.
\newblock On hitting times and fastes strong stationary times for skip-free and more general chains.
\newblock {\em  Journal of Theoretical Probability} {\bf 22}, 587-600, (2009b)

\bibitem{keilson;kester:77}
Keilson, J. and Kester, A. 
\newblock Monotone matrices and monotone Markov processes.
\newblock {\em Stochastic Processes and Their Applications},
{\bf 5}, 231-241, (1977).

\bibitem{lorekszekli}
Lorek, P. , Szekli, R.
\newblock On the speed of convergence to stationarity via spectral gap: queueing networks with breakdowns and repairs. Submitted.

http://www.math.uni.wroc.pl/\~\ szekli/documents/spectral-gap-nets-05-10.pdf

\bibitem{massey:87}
W.~A. Massey.
\newblock Stochastic ordering for {M}arkov processes on partially ordered
  spaces.
\newblock {\em Mathematics of Operations Research}, {\bf 12}, 350--367, (1987).

\bibitem{SauDad03}
Sauer, C. and Daduna, H.
Availability formulas and performance measures for separable  degredable networks.
{\em Economic Quality Control} {\bf 18}(2), 165--194,(2003)

\bibitem{rota}
Rota, G. C.
On the Foundations of Combinatorial Theory
I. Theory of Mobius Functions.
{\em Z. Wahrseheinlichkeitstheorie } {\bf 2}, 340--368, (1964)

\bibitem{xuli}
Susan H. Xu, Haijun Li.
\newblock Majorization of Weighted Trees: A New Tool to Study Correlated Stochastic Systems.
\newblock{\em Mathematics of Operations Research}
\textbf{ 25}, 298-323, (2000)
\end{thebibliography}
\end{document}